\definecolor{Chocolat}{rgb}{0.36, 0.2, 0.09}
\definecolor{BleuTresFonce}{rgb}{0.215, 0.215, 0.36}
\definecolor{EgyptianBlue}{rgb}{0.06, 0.2, 0.65}
\newcommand\cyrillic[1]{
	{\fontencoding{OT2}\fontfamily{wncyr}\selectfont #1}
		}
\newcommand\mathcyr[1]{\text{\cyrillic{#1}}}
\newcommand\Sha{\textnormal{\mathcyr{Sh}}} 
\newtheorem*{itheorem}{Theorem}
\newtheorem{theorem}{Theorem}[section]
\newtheorem{lemma}[theorem]{Lemma}
\newtheorem{proposition}[theorem]{Proposition}
\theoremstyle{definition}
\DeclareMathAlphabet{\pazocal}{OMS}{zplm}{m}{n}
\def\calO{\pazocal{O}}
\def\calS{\pazocal{S}}
\def\calT{\pazocal{T}}
\def\calX{\pazocal{X}}
\DeclareMathAlphabet{\mathbbold}{U}{bbold}{m}{n}
\def\k{\mathbbold{k}}
\newcommand{\twocor}[3]{\ensuremath{ 
\vcenter{\hbox{\xymatrix@R=.4pc@C=.2pc{ 
    #2\ar@{-}[dr] & &#3\ar@{-}[dl]\\
	&*+[o][F-]{#1}\ar@{-}[d]&\\
	&*{}&
}}}}}
\newcommand{\lbincomb}[5]{\ensuremath{
\vcenter{\hbox{\xymatrix@R=.4pc@C=.2pc{ 
			#3\ar@{-}[dr] &&#4\ar@{-}[dl] &\\
			&*+[o][F-]{#2}\ar@{-}[dr]&&#5\ar@{-}[dl]\\
			&&*+[o][F-]{#1}\ar@{-}[d]&\\
			&&*{}&
}}}}}
\newcommand{\rbincomb}[5]{\ensuremath{
\vcenter{\hbox{\xymatrix@R=.4pc@C=.2pc{ 
			 &#4\ar@{-}[dr] &&#5\ar@{-}[dl] \\
			#3\ar@{-}[dr]&&*+[o][F-]{#2}\ar@{-}[dl]\\
			&*+[o][F-]{#1}\ar@{-}[d]&&\\
			&*{}&&
}}}}}
\newcommand{\binfork}[7]{\!\!\!\ensuremath{
\vcenter{\hbox{\xymatrix@R=.4pc@C=.2pc{ 
			#4\ar@{-}[dr] && #5\ar@{-}[dl]\,\,#6\ar@{-}[dr]&&#7\ar@{-}[dl]\\
			&*+[o][F-]{#2}\ar@{-}[dr] &   &*+[o][F-]{#3}\ar@{-}[dl]\\
			&&*+[o][F-]{#1}\ar@{-}[d]&&\\
			&&*{}&&
		}}}}\!\!\!\!}
\newcommand{\llbincomb}[7]{\ensuremath{\!\!\!\!
 \vcenter{\hbox{\xymatrix@R=.4pc@C=.2pc{ 
 			#4\ar@{-}[dr] && #5\ar@{-}[dl]&&\\
 			&*+[o][F-]{#3}\ar@{-}[dr]&&#6\ar@{-}[dl] \\
 			&&*+[o][F-]{#2}\ar@{-}[dr] &   &#7\ar@{-}[dl]\\
 			&&&*+[o][F-]{#1}\ar@{-}[d]&\\
 			&&&*{}&
 		}}}}}
\newcommand{\lbtcomb}[6]{\ensuremath{
\vcenter{\hbox{\xymatrix@R=.4pc@C=.2pc{ 
			#3\ar@{-}[dr] &#4\ar@{-}[d]& #5\ar@{-}[dl]&\\
			&*+[o][F-]{#2}\ar@{-}[dr]&&#6\ar@{-}[dl]\\
			&&*+[o][F-]{#1}\ar@{-}[d]&\\
			&&*{}&
}}}}}
\newcommand{\ltbcomb}[6]{\ensuremath{
\vcenter{\hbox{\xymatrix@R=.4pc@C=.2pc{ 
			#3\ar@{-}[dr] &&#4\ar@{-}[dl] &\\
			&*+[o][F-]{#2}\ar@{-}[dr]&#5\ar@{-}[d]&#6\ar@{-}[dl]\\
			&&*+[o][F-]{#1}\ar@{-}[d]&\\
			&&*{}&
}}}}}
\newcommand{\rrbincombfork}[9]{\ensuremath{
\vcenter{\hbox{\xymatrix@R=.4pc@C=.2pc{ 
			&#6\ar@{-}[dr] && #7\ar@{-}[dl]&#8\ar@{-}[dr] && #9\ar@{-}[dl]\\
			&&*+[o][F-]{#5}\ar@{-}[dr]& && *+[o][F-]{#3}\ar@{-}[dll]&\\
			&#4\ar@{-}[dr] &   &*+[o][F-]{#2}\ar@{-}[dl]&&\\
			&&*+[o][F-]{#1}\ar@{-}[d]&&&\\
			&&*{}&&&&
}}}}\!\!\!\!\!\!\!\!\!\!}
\begin{document}

\title[Characterisation of Lie algebras]{A characterisation of Lie algebras\\ using ideals and subalgebras}

\author{Vladimir Dotsenko}

\address{ 
Institut de Recherche Math\'ematique Avanc\'ee, UMR 7501, Universit\'e de Strasbourg et CNRS, 7 rue Ren\'e-Descartes, 67000 Strasbourg CEDEX, France}

\email{vdotsenko@unistra.fr}

\author{Xabier Garc\'ia-Mart\'inez}

\address{CITMAga \& Universidade de Vigo, Departamento de Matem\'aticas, Esc.\ Sup.\ de Enx.\ Inform\'atica, Campus de Ourense, E--32004 Ourense, Spain---Faculty of Engineering Vrije Universiteit Brussel, Pleinlaan 2, B--1050 Brussel, Belgium}

\email{xabier.garcia.martinez@uvigo.gal}

\date{}

\begin{abstract}
We prove that if, for a nontrivial variety of non-associative algebras, every subalgebra of every free algebra is free and $I^2$ is an ideal whenever $I$ is an ideal, then this variety coincides with the variety of all Lie algebras. 
\end{abstract}

\maketitle


\section{Introduction}

The first categorical characterisation of Lie algebras among all varieties of non-associative algebras appeared in~\cite{MR3872845}, via the admissibility of algebraic exponents in the sense of Gray~\cite{MR2925888,MR2990906}. More precisely, the variety of Lie algebras is the unique non-trivial variety of non-associative algebras which is \emph{locally algebraically cartesian closed} (LACC for short), condition that can be interpreted as follows: a variety $\mathfrak{M}$ is LACC if and only if for any algebra $B$ of the variety, the forgetful functor from the category of $B$-actions to $\mathfrak{M}$ has a right adjoint. Another categorical characterisation was obtained in~\cite{MR4330276} where it is shown that the variety of Lie algebras is the unique non-trivial variety of non-associative algebras whose representations functor is representable. In this paper, we give another characterisation which, while relies on categorical methods, only imposes constraints in the language of classical ring theory. Specifically, we prove the following result. 

\begin{itheorem}
Suppose that $\mathfrak{M}$ is a non-trivial variety of non-associative algebras over a field of zero characteristic satisfying the following two conditions:
\begin{itemize}
\item every subalgebra of every free algebra is free
\item for every ideal $I$ in every algebra, $I^2$ is also an ideal
\end{itemize}	
is the variety of Lie algebras.
\end{itheorem}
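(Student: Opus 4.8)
The plan is to exploit the two hypotheses separately and then combine them. First, recall that a variety of non-associative algebras is defined by multilinear identities, and over a field of characteristic zero it suffices to work with multilinear ones. The degree-two part of the defining relations is governed by an $S_2$-submodule of the regular representation of $S_2$ on the span of $xy$ and $yx$: either the product is forced to be symmetric, antisymmetric, zero, or unconstrained. The condition that $I^2$ be an ideal for every ideal $I$ is a strong structural constraint. I would first analyze what it says already for the free algebra on a small number of generators, or better, for $I$ the whole algebra versus $I$ a principal ideal. In particular, applying it to the free algebra $F$ on two generators $x,y$, taking $I=(x)$ the ideal generated by $x$: then $I^2$ is an ideal, so $[y,[x,x']]\in I^2$ for generic elements, and expanding via the multilinear relations of degree $3$ one extracts identities that the operad must satisfy. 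I expect that this, together with non-triviality (the product is not identically zero), already forces the product to be anticommutative and to satisfy a Jacobi-like relation up to the remaining freedom.

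Second, the hypothesis that every subalgebra of every free algebra is free is the Nielsen--Schreier property. It is classical (Shirshov, Witt) that free Lie algebras have this property, and work of Mikhalev, Umirbaev, and others shows the Nielsen--Schreier property is quite restrictive among varieties. In the operadic language, a variety $\mathfrak{M}$ with this property has a Koszul operad with a very constrained Poincaré series — in fact I would aim to show that the generating function of the operad $\mathcal{P}$ governing $\mathfrak{M}$ must satisfy the same functional equation as that of the Lie operad (roughly $\sum \dim\mathcal{P}(n)t^n/n! = -\log(1-t)$ type behaviour), because the Nielsen--Schreier property translates into the statement that the "Schreier formula" holds, i.e. the forgetful functor to $\Set$ or $\Vect$ behaves on free objects like it does for Lie algebras. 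Concretely, freeness of subalgebras of free algebras forces $\mathrm{rank}$ of a subalgebra of a free algebra of rank $n$ to obey an inclusion-exclusion identity, which pins down $\dim\mathcal{P}(n)=(n-1)!$.

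Third, I would combine the two: the degree-$2$ analysis forces the operad to be binary with an anticommutative generator (the alternative symmetric/zero/associative cases are killed either by non-triviality or by failing one of the two hypotheses — e.g. the free commutative or free associative algebras are not Nielsen--Schreier in the relevant sense, and the relation $I^2 \trianglelefteq$ everything fails). Then $\mathcal{P}$ is a quotient of the free operad on one antisymmetric binary operation. The condition $\dim\mathcal{P}(n)=(n-1)!$ together with $\dim\mathcal{P}(3)=1$ (forced since the free magmatic-anticommutative operad has dimension $2$ in arity $3$, and we need dimension $1$, so exactly one relation in arity $3$) forces that relation to be (a nonzero multiple of) the Jacobi identity — because the $S_3$-module structure leaves essentially no other choice of a codimension-one $S_3$-submodule that is consistent with the dimension count in higher arities. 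Once the Jacobi identity holds in arity $3$ and the dimensions match the Lie operad in all arities, a standard argument (the surjection $\mathrm{Lie}\twoheadrightarrow\mathcal{P}$ from the defining relations, which is an iso by the dimension count) identifies $\mathfrak{M}$ with the variety of Lie algebras.

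The main obstacle I anticipate is the third step: ruling out exotic quotients of the free anticommutative operad that happen to have the right dimensions in low arity but are not Lie, and showing that the Nielsen--Schreier property genuinely forces the full dimension sequence $(n-1)!$ rather than just constraining low degrees. This likely requires either a careful Koszulity argument (showing $\mathcal{P}$ must be Koszul and then using the Koszul dual to run the dimension count) or invoking the structure theory of Nielsen--Schreier varieties; the interaction with the $I^2$-ideal condition, which must be used to get Koszulity or at least quadraticity of the relations, is where the real work lies.
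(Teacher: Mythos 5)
There is a genuine gap, and it sits exactly where the paper's real work lies. Your second step asserts that the Nielsen--Schreier property ``pins down $\dim\mathcal{P}(n)=(n-1)!$''; this is false. Nielsen--Schreier only yields a \emph{lower} bound: if the binary generators span a $k$-dimensional space, one gets $\dim\mathcal{P}(n)\ge k^{n-1}(n-1)!$ (in the paper this comes from the universal multiplicative envelope of a free algebra being a free associative algebra, i.e.\ $\partial(\mathcal{O})$ being a free twisted associative algebra). The absolutely free (magmatic) variety is Nielsen--Schreier with much larger components, and there are infinitely many non-Lie Nielsen--Schreier varieties, so no functional-equation or ``Schreier formula'' argument can force the Lie dimension sequence from that hypothesis alone. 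The matching \emph{upper} bound $\dim\mathcal{P}(n)\le k^{n-1}(n-1)!$ comes from the $I^2$-condition (Anderson's identities rewrite every right comb as a combination of left combs), and the proof hinges on these two bounds meeting exactly, which forces the defining identities to be a quadratic Gr\"obner basis for the reverse path-lexicographic order.

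Your first and third steps also dismiss, without argument, the case where the product has no symmetry at all, i.e.\ $\dim\mathcal{P}(2)=2$. The $I^2$-condition does \emph{not} force anticommutativity: it only imposes Anderson's two identities, leaving (after polarisation into one commutative and one anticommutative operation) a sixteen-parameter family of candidate varieties, each satisfying both the dimension lower bound in arity $2,3$ and the 2-variety identities. Eliminating this family is the bulk of the paper: the forced quadratic-Gr\"obner-basis property produces, via rank conditions in arity $3$ and a confluence (S-polynomial) computation in arity $4$, a system of polynomial constraints on the parameters whose ideal is shown by computer algebra to contain $1$, hence has no solutions. Nothing in your sketch addresses this case, and the anticipated Koszulity route would still have to rule out these candidates; also note that in the commutative one-dimensional case the mock-Lie identity must be excluded precisely because it fails to be a Gr\"obner basis (giving the strict inequality $\dim\mathcal{P}(n)<(n-1)!$ for $n>3$, contradicting the lower bound), another point your case analysis does not cover.
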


Both of these properties have been well studied by ring theorists. The first of them, often referred to as the Nielsen--Schreier property, was first established for groups by Nielsen~\cite{MR1512188} and Schreier~\cite{MR3069472}, and later studied extensively for $\k$-linear case, see, e.g., \cite{MR0020986,MR0059892,MR0062112,MR77525}. Recently, methods of operad theory were used to give an effective combinatorial criterion for the Nielsen--Schreier property~\cite{DU22}, which led to infinitely many new examples of Nielsen--Schreier varieties of non-associative algebras. The second property, known as the 2-variety property, goes back to the work of Anderson~\cite{MR285564} and Zwier~\cite{MR281763}; it was further studied by several authors, particularly in the context of defining radicals of algebras of certain varieties, see~\cite{MR469986,MR414640,MR653895,MR797659,MR506474,MR283034}. 

Like the characterisations of the variety of Lie algebras in \cite{MR4330276,MR3872845}, our work uses computer algebra, though in a significantly different way. Comparing the thus obtained characterisations does however lead to an intriguing open problem. It is known that the LACC condition implies that the canonically induced morphisms
 \[
(B \flat X + B \flat Y) \to B \flat (X + Y) 
 \]
(where $B, X, Y$ are free objects and where $B\flat X$ is the kernel of the unique map $B + X \to B$ induced by the identity and zero morphisms, respectively) are isomorphisms. The surjectivity of these maps is equivalent to the category being algebraically coherent~\cite{MR3438233}, which in turn is equivalent to the 2-variety property~\cite{MR3955044}. Our result prompt a natural question as to whether the injectivity of these maps is equivalent to the Nielsen--Schreier property. 

This manuscript is organised as follows. In Section~\ref{sec:recoll} the necessary theoretical background will be recalled. In Section~\ref{sec:analysis} a preliminary analysis will be provided, understanding the two conditions from an operadic perspective. They both provide bounds on the dimensions of components of the operad encoding the given variety; those bounds overlap in a very narrow way. Finally, in Section~\ref{sec:proof} we shall focus on the computational aspect of the proof and exclude most of the potential candidates, proving the main result, Theorem~\ref{th:main}.

\section{Conventions and recollections}\label{sec:recoll}

All algebras considered in this paper are defined over a ground field $\k$ of zero characteristic. Unless otherwise specified, we use the word ``variety'' in the sense of universal algebra: for us, a \emph{variety of algebras} is an equational theory. It is important to not conflate this notion with that of a \emph{variety of algebra structures} on a certain object, which itself can be a subject of extensive study.
Throughout this paper, our main focus is on varieties of non-associative algebras, meaning that each algebra $V$ of each variety of algebras considered has just one structure operaton, a binary product $V\otimes V\to V$. For the recollection of Gr\"obner bases for operads in this section, we offer a much more general context: we only assume that the signature of our variety does not include constants (structure operations of arity $0$) or structure operations of arity $1$. Operadically, these assumptions are described by the word ``reduced'' and ``connected'' respectively.

\subsection{Varieties and symmetric operads}

It is well known that over a field of characteristic zero every system of algebraic identities is equivalent to multilinear ones. This means that all information about a variety of algebras $\mathfrak{M}$ is captured by the collection
 \[
\calO=\calO_\mathfrak{M}:=\{\calO(n)\}_{n\ge 1},
 \]
where $\calO(n)$ is the $S_n$-module of multilinear elements (that is, elements of multidegree $(1,1,\ldots,1)$) in the free algebra $F_\mathfrak{M}\langle x_1,\ldots,x_n\rangle$. This collection of $S_n$-modules has a very rich structure arising from substituting multilinear elements into one another. A clean, even if slightly abstract way to introduce this structure uses the language of linear species, which we shall now recall. 

The theory of species of structures originated at the concept of a combinatorial species, invented by Joyal \cite{MR633783} and presented in great detail in \cite{MR1629341}. The same definitions apply if one changes the target symmetric monoidal category; in particular, if one considers the category of vector spaces, one obtains what is called a linear species. Let us recall some key definitions, referring the reader to~\cite{MR2724388} for further information. 

A \emph{linear species} is a contravariant functor from the groupoid of finite sets (the category whose objects are finite sets and whose morphisms are bijections) to the category of vector spaces. This definition is not easy to digest at a first glance, and a reader with intuition coming from varieties of algebras is invited to think of the value $\calS(I)$ of a linear species $\calS$ on a finite set $I$ as of the set of multilinear operations of type $\calS$ (accepting arguments from some vector space $V_1$ and assuming values in some vector space $V_2$) whose inputs are indexed by $I$. A linear species $\calS$ is said to be \emph{reduced} if $\calS(\varnothing)=0$; this means that we do not consider ``constant'' multilinear operations. (This is perhaps the only situation where several different terminologies clash in our paper: we use the word ``reduced'' for linear species to indicate that the value on the empty set is zero, and for Gr\"obner bases to indicate that we consider the unique Gr\"obner basis of a certain irreducible form.)

The \emph{composition product} of linear species is defined by the formula
 \[
(\calS_1\circ\calS_2)(I)
=\bigoplus_{n\ge 0}\calS_1(\{1,\ldots,n\})\otimes_{\k S_n}\left(\bigoplus_{I=I_1\sqcup \cdots\sqcup I_n}\calS_2(I_1)\otimes\cdots\otimes \calS_2(I_n)\right).
 \]
The linear species $\mathbbold{1}$ which vanishes on a finite set $I$ unless $|I|=1$, and whose value on $I=\{a\}$ is given by $\k a$ is the unit for the composition product: we have $\mathbbold{1}\circ\calS=\calS\circ\mathbbold{1}=\calS$.

Formally, a \emph{symmetric operad} is a monoid with respect to the composition product. It is just the multilinear version of substitution schemes of free algebras discussed above, but re-packaged in a certain way. The advantage is that the existing intuition of monoids and modules over them, available in any monoidal category \cite{MR0354798}, can be used for studying varieties of algebras. 

The free symmetric operad generated by a linear species $\calX$ is defined as follows. Its underlying linear species is the species $\calT(\calX)$ for which $\calT(\calX)(I)$ is spanned by decorated rooted trees (including the rooted tree without internal vertices and with just one leaf, which corresponds to the unit of the operad): the leaves of a tree must be in bijection with $I$, and each internal vertex $v$ of a tree must be decorated by an element of $\calX(I_v)$, where $I_v$ is the set of incoming edges of $v$. Such decorated trees should be thought of as tensors: they are linear in each vertex decoration. The operad structure is given by grafting of trees onto each other. We remark that one can also talk about the free operad generated by a collection of $S_n$-modules, but the formulas will become heavier.

\subsection{Shuffle operads and Gr\"obner bases}

We shall now recall how to develop a workable theory of normal forms in operads using the theory of Gr\"obner bases developed by the first author and Khoroshkin \cite{MR2667136}. It is important to emphasise that it is in general extremely hard to find convenient normal forms in free algebras for a given variety~$\mathfrak{M}$. However, focusing on multilinear elements simplifies the situation quite drastically: for instance, for a basis in multilinear elements for the operad controlling Lie algebras one may take all left-normed commutators of the form
$[[[a_1,a_{i_2}],\cdots],a_{i_n}]$, 
where $i_2$,\ldots, $i_n$ is a permutation of $2$,\ldots,$n$; by contrast, all known bases in free Lie algebras are noticeably harder to describe.

To define Gr\"obner bases for operads, one builds, step by step, an analogue of the theory of Gr\"obner bases for noncommutative associative algebras. To do this, one has to abandon the universe that has symmetries, otherwise there is not even a good notion of a monomial that leads to a workable theory. The kind of monoids that have a good theory of Gr\"obner bases are \emph{shuffle operads}. A rigorous definition of a shuffle operad uses ordered species \cite{MR1629341}, which we shall now discuss in the linear context. 

An \emph{ordered linear species} is a contravariant functor from the groupoid of finite ordered sets (the category whose objects are finite totally ordered sets and whose morphisms are order preserving bijections) to the category of vector spaces. In terms of the intuition with multilinear maps, this more or less corresponds to choosing a basis of multilinear operations whose inputs are indexed by an ordered set $I$. An ordered linear species $\calS$ is said to be \emph{reduced} if $\calS(\varnothing)=0$. 

The \emph{shuffle composition product} of two reduced ordered linear species $\calS_1$ and~$\calS_2$ is defined by the formula
 \[
(\calS_1\circ_\Sha\calS_2)(I)=\bigoplus_{n\ge 1}\calS_1(\{1,\ldots,n\})\otimes\left(\bigoplus_{\substack{I=I_1\sqcup \cdots\sqcup I_n,\\ 
I_1,\ldots, I_n\ne\varnothing,\\ \min(I_1)<\cdots<\min(I_n)}}\calS_2(I_1)\otimes\cdots\otimes \calS_2(I_n)\right),
 \]
and the linear species $\mathbbold{1}$ discussed above may be regarded as an ordered linear species; as such, it is the unit of the shuffle composition product.

Formally, a \emph{shuffle operad} is a monoid with respect to the shuffle composition product. As we shall see below, each symmetric operad gives rise to a shuffle operad, and that is the main reason to care about shuffle operads. However, we start with explaining how to develop a theory of Gr\"obner bases of ideals in free shuffle operads.

To describe free shuffle operads, we first define shuffle trees. Combinatorially, a \emph{shuffle tree} is a planar rooted tree whose leaves are indexed by a finite ordered set $I$ in such a way that the following ``local increasing condition'' is satisfied: for every vertex of the tree, the minimal leaves of trees grafted at that vertex increase from the left to the right. The free shuffle operad generated by an ordered linear species $\calX$ can be defined as follows. It is an ordered linear species $\calT_\Sha(\calX)$ for which $\calT_\Sha(\calX)(I)$ is spanned by decorated shuffle trees: each internal vertex $v$ of a tree must be decorated by an element of $\calX(I_v)$, where $I_v$ is the set of incoming edges of $v$, ordered from the left to the right according to the planar structure. Such decorated trees should be thought of as tensors: they are linear in each vertex decoration. The operad structure is given by grafting of trees onto each other. There are two particular classes of shuffle trees that will be useful for us, the left combs and the right combs. If, for each internal vertex of a shuffle tree, the only input that is not necessarily a leaf is the leftmost one, the tree is called a left comb; similarly, if the only input that is not necessarily a leaf is the rightmost one, the tree is called a right comb. Despite the similar definitions, the two types of combs are quite different combinatorially, for instance if our shuffle operad is generated by one binary operation, there are two left combs with three leaves but only one right comb, all displayed in the following figure:
 \[
\lbincomb{}{}{1}{2}{3},\quad \lbincomb{}{}{1}{3}{2}, \quad\rbincomb{}{}{1}{2}{3} .
 \]

Given a basis of the vector space of an ordered linear species $\calX$, one may consider all shuffle trees whose vertices are decorated by those basis elements. Such shuffle trees with leaves in a bijection with the given ordered set $I$ form a basis of $\calT_\Sha(\calX)(I)$, and we shall think of them as monomials in the free shuffle operad. 

The next step in developing a theory of Gr\"obner bases is to define divisibility of monomials. Suppose that we have a shuffle tree $S$. We can insert another shuffle tree $S'$ into an internal vertex of $S$, and connect its leaves to the children of that vertex so that the order of leaves agrees with the left-to-right order of the children. We say that the thus obtained shuffle tree is divisible by $S'$, and use this notion of divisibility to define divisibility of decorated shuffle trees, that is of monomials in the free operad. 
The key feature of divisibility that we shall use in most of our proofs is that right combs are very ``rare'': for each sequence of labels of internal vertices, there is a unique right comb with that sequence, and consequently, divisibility by a right comb is extremely easy to check (the condition on the order of leaves is vacuous).

Once divisibility is understood, the usual Gr\"obner--Shirshov method of computing S-polynomials (in the language of Shirshov, one would say ``compositions'', which has the huge disadvantage in the case of operads where the same word is used to talk about the monoid structure), normal forms, etc.\ works in the usual way. The only other required ingredient is an \emph{admissible ordering of monomials}, that is a total ordering of shuffle trees with the given set of leaf labels which is compatible with the shuffle operad structure. Such orderings exist, and we invite the reader to consult \cite{MR3642294,MR4114993} for definitions and examples. For us the so called graded path-lexicographic ordering and reverse graded path-lexicographic ordering will be of particular importance. With respect to the former, the trees are first compared by the depth of their leaves, while with respect to the latter, one reverses the comparison with respect to the depth of the leaves (in both cases, leaves are considered one by one in their given order).

Note that there is a forgetful functor $\calS\mapsto \calS^f$ from all linear species to ordered linear species; it is defined by the formula $\calS^f(I):=\calS(I^f)$, where $I$ is a finite totally ordered set and $I^f$ is the same set but with the total order ignored. The reason to consider ordered linear species and shuffle operads is explained by the following proposition. 

\begin{proposition}[{\cite{MR3642294,MR2667136}}]
For any two linear species $\calS_1$ and $\calS_2$, we have the ordered linear species isomorphism
\[
(\calS_1\circ\calS_2)^f\cong\calS_1^f\circ_\Sha\calS_2^f.
\]
In particular, applying the forgetful functor to a reduced symmetric operad gives a shuffle operad. 
\end{proposition}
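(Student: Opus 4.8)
The plan is to prove the isomorphism component by component in the arity $n$, and then to deduce the operad statement formally. Fix a finite totally ordered set $I$. By definition $(\calS_1\circ\calS_2)^f(I)=(\calS_1\circ\calS_2)(I^f)$ is the direct sum over $n\ge 0$ of $\calS_1(\{1,\ldots,n\})\otimes_{\k S_n}M_n$, where
\[
M_n:=\bigoplus_{I=I_1\sqcup\cdots\sqcup I_n}\calS_2(I_1)\otimes\cdots\otimes\calS_2(I_n)
\]
and $S_n$ acts by simultaneously permuting the indexing tuples $(I_1,\ldots,I_n)$ and the corresponding tensor factors. (We read each block $I_j$ as a subset of $I$, so that it makes sense to speak of its minimum.) Since all species in sight are reduced, $\calS_2(\varnothing)=0$, so only partitions into nonempty blocks contribute; in particular the $n=0$ term survives only when $I=\varnothing$, where it equals $\calS_1(\varnothing)=0$, so both sides vanish on the empty set and we may henceforth assume $I\ne\varnothing$ and $n\ge 1$. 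The entire content of the statement is that $\otimes_{\k S_n}$ applied to $M_n$ replaces the sum over \emph{all} ordered partitions into nonempty blocks by the sum over those listed in increasing order of minima — exactly the range of summation in the definition of $\circ_\Sha$.

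The key point is the following elementary observation: $S_n$ acts \emph{freely} on the set of ordered partitions of $I$ into $n$ nonempty blocks. Indeed, pairwise disjoint nonempty sets are pairwise distinct, so if a permutation fixes a tuple $(I_1,\ldots,I_n)$ of such sets it must be the identity. Moreover, every orbit — that is, every unordered partition of $I$ into $n$ nonempty blocks — contains exactly one tuple with $\min(I_1)<\cdots<\min(I_n)$, because disjoint nonempty subsets of the totally ordered set $I$ have pairwise distinct minima. Choosing this distinguished representative in each orbit exhibits $M_n$ as a free $\k S_n$-module, $M_n\cong \k S_n\otimes_\k N_n$, where $N_n$ is the sub-sum of $M_n$ over the tuples with $\min(I_1)<\cdots<\min(I_n)$. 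Hence
\[
\calS_1(\{1,\ldots,n\})\otimes_{\k S_n}M_n\;\cong\;\calS_1(\{1,\ldots,n\})\otimes_\k N_n,
\]
and summing over $n\ge 1$ gives precisely $(\calS_1^f\circ_\Sha\calS_2^f)(I)$. All of the above was carried out functorially in $I$ (an order-preserving bijection permutes blocks and commutes with taking minima), so these component-wise isomorphisms assemble into an isomorphism of ordered linear species.

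For the last assertion, take $\calS_1=\calS_2=\calP$ for a reduced symmetric operad $\calP$. Because the isomorphism just constructed is natural in both variables, iterating it identifies $(\calP\circ\calP\circ\calP)^f$ with $\calP^f\circ_\Sha\calP^f\circ_\Sha\calP^f$ compatibly with the two ways of bracketing; therefore the composition map $\calP\circ\calP\to\calP$ and the unit $\mathbbold{1}\to\calP$ transport, after applying $(-)^f$ and using $\mathbbold{1}^f=\mathbbold{1}$, to maps $\calP^f\circ_\Sha\calP^f\to\calP^f$ and $\mathbbold{1}\to\calP^f$ satisfying the shuffle-operad axioms. I do not expect any genuine conceptual difficulty here: the only points that require care are the bookkeeping of the $S_n$-action on $M_n$ — so that the ``free permutation module'' identification is unambiguous — together with the routine checks of naturality in $I$ and of compatibility with the monoid structure. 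These are exactly the reindexing-of-tensor-factors manipulations that are easy to get wrong if done carelessly, but present no real obstacle once the conventions are pinned down.
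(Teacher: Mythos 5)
Your argument is correct: the paper itself gives no proof (it cites Bremner--Dotsenko and Dotsenko--Khoroshkin), and your proof is essentially the standard one found there --- the free $S_n$-action on ordered partitions into nonempty blocks (which is where reducedness enters), with the unique representative of each orbit ordered by increasing minima, identifies the coinvariants summand with the shuffle summand, and the resulting strong monoidal structure on $(-)^f$ transports monoids to monoids. The only points you leave implicit, naturality and the associativity/unit coherences needed for the second assertion, are indeed routine and are treated the same way in the cited sources.
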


This result shows that the forgetful functor from symmetric operads to shuffle operads allows one to go from the universe of ``interesting'' objects (actual varieties of algebras) to the universe of ``manageable'' objects (shuffle operads) without losing much information (just the symmetric group actions end up ignored); in particular, one can determine bases and dimensions of components of an operad, which is crucial for the main result of this paper.

\section{Preliminary analysis}\label{sec:analysis}

In this section, we establish the following result which will then be used to analyse our problem using computer algebra. 

\begin{proposition}\label{prop:BoundsMeet}
Let $\mathfrak{M}$ be a Nielsen--Schreier 2-variety of non-associative algebras encoded by an operad $\calO$. One of the following possibilities occurs:
\begin{itemize}
\item the vector space $\calO(2)$ is equal to $\{0\}$, and $\mathfrak{M}$ is trivial,
\item the vector space $\calO(2)$ is of dimension $1$, and $\mathfrak{M}$ is the variety of all Lie algebras, 
\item the vector space $\calO(2)$ is of dimension $2$, the module of quadratic relations of $\calO$ is of dimension $4$, and the operad $\calO$ has a quadratic Gr\"obner basis for the reverse path-lexicographic ordering.
\end{itemize}
\end{proposition}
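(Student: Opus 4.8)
The plan is to convert each of the two hypotheses into a constraint on the low-arity part of the operad $\calO$ controlling $\mathfrak{M}$, and then to observe that the two constraints are simultaneously satisfiable only in the three listed situations. The first step is immediate: since the variety is generated by a single binary operation, $\calO(2)$ is a quotient of a two-dimensional space, so $\dim\calO(2)\in\{0,1,2\}$, and the space of arity-two relations is an $S_2$-submodule of that two-dimensional space; hence when $\dim\calO(2)=1$ the operation is either commutative or anticommutative. If $\dim\calO(2)=0$ all products vanish and $\mathfrak{M}$ is trivial, which is the first alternative.

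Next I would make the $2$-variety condition explicit in arity three. Writing $\calT(\calO(2))(3)=V_1\oplus V_2\oplus V_3$, where $V_c$ is spanned by the decorated trees in which the leaf $c$ is grafted directly at the root, the universal instance of ``$I^2$ is an ideal'' — taking $I$ to be the ideal generated by two of the three variables in the free algebra on three generators — says exactly that, modulo the space $R$ of quadratic relations, each $V_c$ lies in $\bigoplus_{c'\ne c}V_{c'}$; equivalently, the composite $R\hookrightarrow\calT(\calO(2))(3)\twoheadrightarrow V_c$ is surjective for every $c$, so $\dim R\ge(\dim\calO(2))^2$ and $\dim\calO(3)\le 2(\dim\calO(2))^2$. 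For the Nielsen--Schreier property I would invoke the effective criterion of~\cite{DU22}, which for a Nielsen--Schreier variety produces a Gr\"obner basis of $\calO$ for the reverse graded path-lexicographic order all of whose leading monomials are right combs. As there are only $(\dim\calO(2))^2$ right combs in arity three, this already forces $\dim\calO(3)\ge 3(\dim\calO(2))^2-(\dim\calO(2))^2=2(\dim\calO(2))^2$; combining the two inequalities, $\dim R=(\dim\calO(2))^2$ and a reduced Gr\"obner basis uses every arity-three right comb as a leading monomial. Since every right comb of arity greater than three is divisible by one of arity three, there can be no further Gr\"obner basis elements, so $\calO$ is quadratic with a quadratic Gr\"obner basis for the reverse path-lexicographic order.

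It then remains to run through $\dim\calO(2)\in\{1,2\}$. When $\dim\calO(2)=2$ we are in the third alternative. When $\dim\calO(2)=1$ we have $\dim R=1$, and since $\calT(\calO(2))(3)$ decomposes as the trivial plus the standard representation of $S_3$, the single relation must be the cyclic one. If the operation is anticommutative this is the Jacobi identity, so $\mathfrak{M}$ is contained in the variety of Lie algebras; a further relation of minimal arity $n_0\ge 4$ would have to contribute, after reduction, a new right-comb leading monomial in arity $n_0$, but the unique such right comb is already divisible by the leading monomial of the Jacobi relation — so no further relation exists and $\mathfrak{M}$ is exactly the variety of Lie algebras, which is the second alternative. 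If the operation is commutative, the resulting variety — commutative algebras with $(xy)z+(yz)x+(zx)y=0$ — is not Nielsen--Schreier, and I would verify this by hand: in the free algebra on one generator $x$ the element $x^{2}$ spans a one-dimensional subalgebra because $x^{2}\cdot x^{2}=0$, yet no nonzero free algebra of this variety is one-dimensional. This rules out the commutative branch.

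The part I expect to be the real obstacle is the Nielsen--Schreier side: extracting from~\cite{DU22} precisely the statement used above — a right-comb Gr\"obner basis for the reverse path-lexicographic order — so that the resulting lower bound on $\dim\calO(3)$ meets the $2$-variety upper bound exactly, and, more delicately, disposing of the commutative sub-case when $\dim\calO(2)=1$, for which the dimension count alone is not conclusive and one must genuinely argue that the Nielsen--Schreier property fails.
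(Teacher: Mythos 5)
Your treatment of the $2$-variety condition in arity $3$ is fine (it is a clean multilinear reformulation of Anderson's identities and correctly yields $\dim R\ge(\dim\calO(2))^2$, hence $\dim\calO(3)\le 2(\dim\calO(2))^2$), and your direct argument that the mock-Lie variety is not Nielsen--Schreier (the one-dimensional subalgebra $\k x^2$ of the free algebra on one generator, using $x^2x=0$ and $x^2x^2=0$) is a correct and pleasant alternative to the paper's S-polynomial computation. The problem is the Nielsen--Schreier side: you attribute to~\cite{DU22} the statement that for every Nielsen--Schreier variety the operad admits a Gr\"obner basis for the reverse graded path-lexicographic order all of whose leading monomials are right combs. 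That is not what~\cite{DU22} provides, and it is not proved in your proposal. What that paper (and Lemma~\ref{lm:NSBound} here, via freeness of $\partial(\calO)$ coming from Umirbaev's theorem on enveloping algebras) gives is the dimension lower bound $\dim\calO(n)\ge k^{n-1}(n-1)!$; the Gr\"obner-basis-type statements in~\cite{DU22} go in the \emph{sufficient} direction (a suitable leading-term pattern implies the freeness needed for the Nielsen--Schreier property), not in the direction you need, namely that the Nielsen--Schreier property forces such a basis for one specific monomial order. This unproved claim carries the entire weight of your argument: it is exactly what produces the conclusion of the third bullet (quadratic Gr\"obner basis with the four right combs as leading terms), and it is also what you use to exclude higher-arity relations in the anticommutative $k=1$ case.

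By contrast, the paper derives the Gr\"obner basis conclusion only from the \emph{conjunction} of the two hypotheses: the Nielsen--Schreier lower bound $k^{n-1}(n-1)!$ and the $2$-variety upper bound $k^{n-1}(n-1)!$ (obtained by rewriting right combs into left combs in \emph{every} arity, Lemma~\ref{lm:2VarBound}) must meet, so all left combs are linearly independent in every arity, which is equivalent to the quadratic relations forming a reduced Gr\"obner basis for the reverse path-lexicographic order. Note that the legitimate part of~\cite{DU22} does recover your arity-$3$ conclusions ($\dim\calO(3)=2k^2$, $\dim R=k^2$, hence the cyclic relation when $k=1$), and your Lie case can be patched by the dimension bound alone (a quotient of the Lie operad with $\dim\calO(n)\ge(n-1)!$ is the Lie operad); but the quadratic Gr\"obner basis assertion in the $k=2$ case cannot be reached from arity-$3$ information plus the unproved structural claim, so as written the proof of the third alternative has a genuine gap.
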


Our strategy is as follows. We shall first recall results of~\cite{DU22} allowing one to give a lower bound on $\dim\calO(n)$ for a Nielsen--Schreier variety. We then establish an upper bound on $\dim\calO(n)$ for a 2-variety. Remarkably, those bounds are exactly the same, which will force the statement of the theorem to hold.

By definition, a variety of non-associative algebras has just one structure operation which is binary. The vector space $\calO(2)$ is the $S_2$-module generated by this operation, which explains the trichotomy in the statement of the theorem: such a module may be of dimension $0$, $1$, or $2$. 

The following result is a part of~\cite[Cor.4.7]{DU22}; it only depends on a small part of \emph{op.\ cit.}, so we include a detailed proof for completeness. 

\begin{lemma}\label{lm:NSBound}
Suppose that $\mathfrak{M}$ is a Nielsen--Schreier variety of algebras whose structure operations are all of arity $2$ and form a $k$-dimensional vector space. Then for the corresponding operad $\calO$, we have $\dim\calO(n)\ge k^{n-1}(n-1)!$.
\end{lemma}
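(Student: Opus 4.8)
The plan is to exhibit, inside the free algebra $F_\mathfrak{M}\langle x_1,\ldots,x_n\rangle$, a subalgebra which is itself free on a set of $k(n-1)$ generators (or more precisely, on a multigraded piece of that size), and then to use the Nielsen--Schreier property to extract a dimension count. The cleanest route is to work one variable at a time. Consider the free algebra $A = F_\mathfrak{M}\langle x_1,\ldots,x_n\rangle$ and the subalgebra $B \subseteq A$ generated by all products $\mu(x_i, x_n)$ and $\mu(x_n, x_i)$ with $i < n$, as $\mu$ ranges over a fixed basis of the $k$-dimensional space of structure operations --- in other words, the subalgebra generated by the $2k(n-1)$ elements that are bilinear, have $x$-degree $1$ in exactly one of $x_1,\ldots,x_{n-1}$, and $x$-degree $1$ in $x_n$. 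Actually, it is more efficient to iterate: let $A_1 = F_\mathfrak{M}\langle x_1\rangle$, and having built a free subalgebra of $F_\mathfrak{M}\langle x_1,\ldots,x_{j}\rangle$ on suitable generators, adjoin $x_{j+1}$ and pass to the subalgebra generated by the old generators together with the new products involving $x_{j+1}$.

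The key step I expect to carry the argument is the following: if $\mathfrak{M}$ is Nielsen--Schreier and $A$ is free, then any subalgebra $B$ of $A$ is free, and a free generating set of $B$ may be found among homogeneous elements; counting multidegrees then forces $\dim\calO(n)$ from below. Concretely, I would track the multilinear (multidegree $(1,\ldots,1)$) component. The multilinear part of $F_\mathfrak{M}\langle x_1,\ldots,x_n\rangle$ is $\calO(n)$ by definition. On the other hand, take the subalgebra $B$ generated by the $k(n-1)$ elements $\mu(x_n, x_i)$ for a fixed basis $\mu_1,\ldots,\mu_k$ of $\calO(2)$ and $i=1,\ldots,n-1$; these are linearly independent in $A$ (they live in distinct multidegrees paired with the operation label, and $\calO(2)$ is genuinely $k$-dimensional so $\mu_1(x_n,x_i),\ldots,\mu_k(x_n,x_i)$ are independent). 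Being a subalgebra of a free algebra, $B$ is free; since its generators are homogeneous and span the lowest-degree piece, a free generating set is exactly these $k(n-1)$ elements. The free algebra on $k(n-1)$ generators, each carrying one copy of $x_n$, has in $x_n$-degree one a space whose dimension is $k(n-1) \cdot \dim(\text{free alg. on } k(n-1) \text{ gens, multilinear-ish part})$; iterating the bound $\dim\calO(n) \ge k(n-1)\cdot \dim\calO(n-1)$ and using $\dim\calO(1)=1$ gives $\dim\calO(n)\ge k^{n-1}(n-1)!$.

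The main obstacle is making the iteration rigorous without accidentally overcounting or assuming more than the Nielsen--Schreier property delivers: one must ensure that the chosen generators really do form part of a \emph{free} generating set of the subalgebra (not merely a spanning set), which follows because in a free algebra the subalgebra they generate has its degree-one-in-$x_n$ component spanned precisely by them, so by freeness of the subalgebra they must be free generators. I would phrase the induction as: fix the basis $\mu_1,\ldots,\mu_k$ of $\calO(2)$; in $F_\mathfrak{M}\langle x_1,\ldots,x_n\rangle$ let $B$ be generated by $\{\mu_a(x_n, m) : 1\le a\le k,\ m \text{ a multilinear monomial in } x_1,\ldots,x_{n-1}\}$, observe by the Nielsen--Schreier property that $B$ is free, identify its free generators with this set (whose cardinality in the multilinear-in-$x_1,\ldots,x_{n-1}$ range is $k\cdot\dim\calO(n-1)$), and note that $\calO(n)$ contains the degree-one-in-$x_n$, multilinear-in-the-rest component of $B$, which as a free algebra contributes $k(n-1)!k^{n-2} = k^{n-1}(n-1)!$ after the induction hypothesis is applied. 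The only subtlety requiring care is the bookkeeping between "multilinear in $n$ letters" and "degree one in $x_n$, linear in $x_1,\ldots,x_{n-1}$", which is precisely the passage from $\calO(n)$ to $\calO(n-1)$ tensored with the generator labels.
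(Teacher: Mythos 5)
The decisive step of your plan does not follow from the Nielsen--Schreier property, and the argument becomes circular exactly there. You take the subalgebra $B$ of $F_\mathfrak{M}\langle x_1,\ldots,x_n\rangle$ generated by the elements $\mu_a(x_n,m)$ and argue: $B$ is free, the generators are homogeneous and span the lowest-degree piece, hence they form a free generating set of cardinality $k\cdot\dim\calO(n-1)$, hence they are linearly independent. But freeness of $B$ only says that $B$ is free on \emph{some} generating set; if the elements $\mu_a(x_n,m_j)$ happened to be linearly dependent (and in a variety with many identities such products can very well collapse), then $B$ would simply be free on fewer generators, with no contradiction. Identifying a homogeneous free generating set with a basis of the lowest-degree component of $B$ is fine, but the dimension of that component \emph{is} the dimension of the span of your chosen elements --- precisely the quantity you are trying to bound from below. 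So Nielsen--Schreier, applied to subalgebras of the free algebra in this way, yields no lower bound at all; the independence has to come from somewhere else. (This is why the paper's proof routes through Umirbaev's theorem: Nielsen--Schreier forces the universal multiplicative enveloping algebra $U_\calO(A)$ of a free algebra to be a free associative algebra, and the independence is then extracted at the level of the twisted associative algebra $\partial(\calO)$, in which $\partial(\calX)$ generates a free subalgebra. A smaller but real additional issue is that choosing \emph{homogeneous} free generators of a graded subalgebra also needs justification beyond the bare statement of the Nielsen--Schreier property.)

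Even granting the independence of the family $\{\mu_a(x_n,m_j)\}$, your recursion falls short of the stated bound. Every generator of $B$ has $x_n$-degree one, so the multilinear part of $B$ lies in the linear span of the generators, of which there are only $k\cdot\dim\calO(n-1)$; adding the mirror elements $\mu_a(m_j,x_n)$ at best doubles this. Hence the recursion you can hope for is $\dim\calO(n)\ge k\,\dim\calO(n-1)$ (up to a bounded constant), which iterates to exponential growth $k^{n-1}$, not to $k^{n-1}(n-1)!$. Your arithmetic ``$k(n-1)!\,k^{n-2}=k^{n-1}(n-1)!$'' tacitly uses $\dim\calO(n-1)\ge k^{n-2}(n-1)!$ in place of the actual induction hypothesis $k^{n-2}(n-2)!$, so a factor $(n-1)$ is smuggled in at each step. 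The factorial in the true bound does not come from products of $x_n$ with multilinear elements in the remaining variables: already for Lie algebras most of $\calO(n)$ is not of the form $[x_n,m]$, and the $(n-1)!$ counts iterated multiplications of a fixed variable by the others in all possible orders (with $k$ choices of operation at each step), i.e.\ it reflects the freeness of the algebra of multiplication operators, which is exactly the input the paper obtains from \cite{MR1302528} and which your construction does not capture.
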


\begin{proof}
Let us denote by $\calX$ the species of generators of $\calO$. Since $\mathfrak{M}$ is Nielsen--Schreier, according to~\cite[Th.~1]{MR1302528}, for each free algebra $A=\calO(V)$, its universal multiplicative enveloping algebra $U_\calO(A)$ is a free associative algebra. We have
 \[
U_\calO(A)\cong\partial(\calO)\circ_{\calO} A=\partial(\calO)\circ_{\calO} \calO(V)\cong\partial(\calO)(V), 
 \] 
with the product of $U_\calO(A)$ induced from that of $\partial(\calO)$ on the twisted associative algebra level, so $\partial(\calO)$ must be free as a twisted associative algebra. Clearly, $\partial(\calX)$ is a part of the minimal generating set of $\partial(\calO)$, and so $\partial(\calX)$ generates a free twisted associative subalgebra. The dimension of the $n$-th component of the free twisted associative algebra generated by a $k$-dimensional species supported on one-element sets is $k^nn!$, and it remains to shift the index by one to account for the application of $\partial$.
\end{proof}

We shall now analyse the 2-variety condition. Recall that it is established in~\cite[p.~30]{MR285564} that a variety of non-associative algebras $\mathfrak{M}$ is a 2-variety if and only if the following two identities are satisfied in each algebra of $\mathfrak{M}$:

\begin{multline}\label{eq:2var1}
(x_1x_2)x_3=\lambda_1(x_3x_1)x_2+\lambda_2(x_1x_3)x_2+\lambda_3x_2(x_3x_1)+\lambda_4x_2(x_1x_3)\\
+\lambda_5(x_3x_2)x_1+\lambda_6(x_2x_3)x_1+\lambda_7x_1(x_3x_2)+\lambda_8x_1(x_2x_3),
\end{multline}

\begin{multline}\label{eq:2var2}
x_3(x_1x_2)=\rho_1(x_3x_1)x_2+\rho_2(x_1x_3)x_2+\rho_3x_2(x_3x_1)+\rho_4x_2(x_1x_3)\\
+\rho_5(x_3x_2)x_1+\rho_6(x_2x_3)x_1+\rho_7x_1(x_3x_2)+\rho_8x_1(x_2x_3).
\end{multline}

We shall use these identities to establish an upper bound on dimensions of components of any operad encoding a 2-variety of non-associative algebras.

\begin{lemma}\label{lm:2VarBound}
Let $\mathfrak{M}$ be a 2-variety of non-associative algebras encoded by an operad $\calO$. One of the following possibilities occurs:
\begin{itemize}
\item the vector space $\calO(2)$ is equal to $\{0\}$, and $\mathfrak{M}$ is trivial,
\item the vector space $\calO(2)$ is of dimension $1$, and one the following may occur:
\begin{itemize}
\item $\dim\calO(3)=2$, and $\dim\calO(n)\le (n-1)!$ for all $n$, 
\item $\dim\calO(3)=1$, and $\dim\calO(n)\le 1$ for all $n$,
\item $\dim\calO(3)=0$,
\end{itemize}
\item the vector space $\calO(2)$ is of dimension $2$, and $\dim\calO(n)\le 2^{n-1}(n-1)!$ for all $n$.
\end{itemize}
\end{lemma}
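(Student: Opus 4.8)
The plan is to bound $\dim\calO(n)$ by analyzing how the two rewriting rules \eqref{eq:2var1} and \eqref{eq:2var2} reduce the spanning set of shuffle-tree monomials. The key observation is that \eqref{eq:2var1} and \eqref{eq:2var2} express the two ``depth-two'' left-comb and right-comb monomials on three inputs — namely $(x_1x_2)x_3$ and $x_3(x_1x_2)$ — as linear combinations of other monomials on three inputs. Passing to the operad $\calO$, this means that in $\calO(3)$ the images of these two particular compositions are determined by the remaining six. More importantly, I would argue that \emph{every} monomial of arity $n$ in which some internal vertex has a non-leaf input that is not in the ``last'' position can be rewritten, via repeated application of these identities applied at that vertex, as a combination of monomials that are right combs (every internal vertex has all inputs leaves except possibly the rightmost). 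This is the combinatorial heart: because \eqref{eq:2var1} and \eqref{eq:2var2} let one ``push'' a product past a variable on either side, any monomial reduces to a span of right-comb monomials, of which — as recalled in the excerpt — there are exactly $k^{n-1}(n-1)!$ when the generating space is $k$-dimensional (one right comb per sequence of $n-1$ vertex decorations times the $(n-1)!$ orderings of leaves $2,\dots,n$ relative to the fixed leftmost leaf $1$, subject to the local increasing condition being automatic for right combs). Hence $\dim\calO(n)\le k^{n-1}(n-1)!$, which gives the bound $2^{n-1}(n-1)!$ when $\dim\calO(2)=2$ and the bound $(n-1)!$ when $\dim\calO(2)=1$.

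To get the finer trichotomy in the $\dim\calO(2)=1$ case, I would look at $\calO(3)$ directly. With a one-dimensional space of operations spanned by a product $\mu$, there are five monomials on three leaves (the two left combs $(x_1x_2)x_3$, $(x_1x_3)x_2$ and the three right combs $x_1(x_2x_3)$, $x_2(x_1x_3)$, $x_3(x_1x_2)$ — wait, more carefully, the right combs with three leaves: as the excerpt notes there is only one right comb shape, so the right-comb monomials are $x_1(x_2x_3)$, $x_2(x_1x_3)$, $x_3(x_1x_2)$ after also accounting for the $S_2$-action possibly identifying $\mu(a,b)=\pm\mu(b,a)$). The identities \eqref{eq:2var1}–\eqref{eq:2var2} impose that $(x_1x_2)x_3$ and $x_3(x_1x_2)$ lie in the span of the others; so a priori $\dim\calO(3)\le 3$, but one checks that the symmetry type of $\mu$ (commutative, anticommutative, or neither) together with consistency of \eqref{eq:2var1}–\eqref{eq:2var2} forces $\dim\calO(3)\in\{0,1,2\}$. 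If $\dim\calO(3)=0$ there is nothing more to say. If $\dim\calO(3)=1$, then all of $\calO(3)$ is a single monomial, and one propagates this: every arity-$4$ monomial reduces to a right comb, each right comb contains an arity-$3$ right comb as a divisor, and the arity-$3$ relations collapse these so that $\dim\calO(4)\le 1$, and inductively $\dim\calO(n)\le 1$. If $\dim\calO(3)=2$, the general bound $\dim\calO(n)\le(n-1)!$ already applies.

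The main obstacle I anticipate is making the reduction ``every monomial reduces to a span of right combs'' genuinely rigorous, rather than merely plausible: one must choose an admissible ordering (the reverse graded path-lexicographic ordering is the natural candidate, since it de-prioritizes depth in a way that makes non-right-combs reducible) and verify that under this ordering the leading terms of the consequences of \eqref{eq:2var1} and \eqref{eq:2var2} are exactly $(x_1x_2)x_3$ and $x_3(x_1x_2)$, so that any monomial with a ``deep'' left input is not in normal form. One then needs a confluence/termination argument — or at least the weaker statement that the normal monomials are among the right combs, which suffices for the upper bound on dimension without needing a full Gröbner basis. A secondary subtlety is the $\dim\calO(3)\le 1$ bookkeeping in the one-dimensional case, where the interaction between the $S_2$-symmetry of $\mu$ and the coefficients $\lambda_i,\rho_i$ must be tracked; but this is a finite, explicit linear-algebra check.
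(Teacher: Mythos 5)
There is a genuine gap, and it is located exactly at what you call the ``combinatorial heart''. Your plan is to rewrite every shuffle monomial into a span of \emph{right} combs and to count right combs as $k^{n-1}(n-1)!$. Both halves are wrong. First, the count: for a right comb the local increasing condition is not free --- at the root the leftmost input is a leaf that must be smaller than the minimum of the right subtree, hence it is $1$, and recursively the leaf labels are forced to be $1,2,\dots,n$ in order. So there are exactly $k^{n-1}$ right combs of arity $n$ (the paper's remark that ``the condition on the order of leaves is vacuous'' concerns \emph{divisibility} by a right comb, not the labelling of a right comb itself); the $(n-1)!$ factor you invoke belongs to the \emph{left} combs, where leaf $1$ sits at the bottom and the leaves $2,\dots,n$ may be attached in any order. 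Second, the direction of rewriting: if your reduction to right combs were available, it would give $\dim\calO(n)\le k^{n-1}$, which is already false for Lie algebras --- a $2$-variety with $\dim\calO(2)=1$ and $\dim\calO(n)=(n-1)!$ --- so no choice of admissible ordering can make it work. Concretely, in the case $\dim\calO(2)=2$ the $2$-variety condition only guarantees (after acting by $S_3$ and re-solving) four independent quadratic relations among the twelve arity-$3$ shuffle monomials, which is not nearly enough to eliminate the eight monomials that are not right combs; what those four relations \emph{can} be arranged to do is the opposite: their left-hand sides can be taken to be the four right combs $x_1(x_2x_3)$, $x_1(x_3x_2)$, $(x_2x_3)x_1$, $(x_3x_2)x_1$, which are the leading terms for the reverse path-lexicographic ordering, so every tree divisible by a right comb is rewritten by smaller trees and the \emph{left} combs span, giving precisely $\dim\calO(n)\le 2^{n-1}(n-1)!$. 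Your meta-remark that a spanning argument suffices and no confluence is needed is correct, but it must be applied to this orientation, and the step you skip --- extracting from the $S_3$-orbit of \eqref{eq:2var1}--\eqref{eq:2var2} a set of relations whose leading terms are exactly the right combs --- is where the actual work lies.

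A secondary issue is the case $\dim\calO(2)=1$, where your bookkeeping is off (the arity-$3$ component of the free shuffle operad on one (anti)commutative binary operation is $3$-dimensional, not $5$-dimensional) and the trichotomy is asserted rather than derived. The clean route is the $S_3$-module structure: the arity-$3$ space decomposes as the trivial (resp.\ sign) representation plus the $2$-dimensional irreducible, so a nonzero relation submodule must contain the mock-Lie (resp.\ Jacobi) identity, the (anti-)associativity identity, or the nilpotence identity, and each of these three cases is then handled directly --- in the first by observing that the single relation has the right comb $a_1(a_2a_3)$ as leading term for the reverse path-lexicographic ordering (so again the left combs span and $\dim\calO(n)\le(n-1)!$), in the second by comparison with the (anti)associative commutative operad, and in the third trivially.
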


\begin{proof}
As above, the vector space $\calO(2)$ is the $S_2$-module generated by the only structure operation of $\mathfrak{M}$, so it is of dimension $0$, $1$, or $2$. The 2-variety condition implies that $\calO$ has at least one relation that is quadratic (in the structure operation, so in the more classical language, an identity of degree $3$ holds in all algebras of $\mathfrak{M}$).

If $\dim\calO(2)=0$, our assertion is obvious. 
Suppose that $\dim\calO(2)=1$, and the structure operation is commutative. The space of elements of arity three in the free operad generated by one commutative binary operation is three-dimensional, and as an $S_3$-module, it is the sum of the trivial representation and the two-dimensional irreducible representation. This immediately implies that in each 2-variety of commutative algebras one of the following identities holds:
\begin{itemize}
\item the mock-Lie identity $(a_1a_2)a_3+(a_2a_3)a_1+(a_3a_1)a_2=0$, 
\item the associativity identity $(a_1a_2)a_3=a_1(a_2a_3)$, 
\item the nilpotence identity $(a_1a_2)a_3=0$.
\end{itemize}
For the first of them, the reduced Gr\"obner basis of the corresponding shuffle operad for the reverse path-lexicographic ordering contains the element 
 \[
a_1(a_2a_3)+(a_1a_2)a_3+(a_1a_3)a_2,
 \] 
and the leading term $a_1(a_2a_3)$ of this element eliminates all shuffle trees that are not left combs. Thus, we have $\dim\calO(n)\le(n-1)!$ for all $n$. For the second identity, identity one has $\dim\calO(n)\le 1$ for all $n$, since imposing the associativity condition alone gives us the operad of commutative associative algebras. Finally, for the third identity, one clearly has $\calO(n)=0$ for all $n\ge 3$.

Suppose that $\dim\calO(2)=1$, and the structure operation is anti-commutative. The space of elements of arity three in the free operad generated by one anti-commutative binary operation is three-dimensional, and as an $S_3$-module, it is the sum of the sign representation and the two-dimensional irreducible representation. This immediately implies that in each 2-variety of anti-commutative algebras one of the following identities holds:
\begin{itemize}
\item the Jacobi identity $(a_1a_2)a_3+(a_2a_3)a_1+(a_3a_1)a_2=0$, 
\item the anti-associativity identity $(a_1a_2)a_3+a_1(a_2a_3)=0$, 
\item the nilpotence identity $(a_1a_2)a_3=0$.
\end{itemize}
For the first of them, the reduced Gr\"obner basis of the corresponding shuffle operad for the reverse path-lexicographic ordering contains the element
 \[
a_1(a_2a_3)-(a_1a_2)a_3+(a_1a_3)a_2,
 \]
 and the leading term $a_1(a_2a_3)$ of this element eliminates all shuffle trees that are not left combs. Thus, we have $\dim\calO(n)\le (n-1)!$ for all $n$. For the second identity, an immediate computation shows that $\calO(n)=0$ for all $n\ge 4$. Finally, for the third identity, one clearly has $\calO(n)=0$ for all $n\ge 3$.

It remains to consider the case $\dim\calO(2)=2$. We shall once again examine the reduced Gr\"obner basis of the corresponding shuffle operad for the reverse path-lexicographic ordering. Let us follow the simplest way of describing the corresponding shuffle operad~\cite[Sec.~5.3.4]{MR3642294}, and choose the operations $u(a_1,a_2)=a_1a_2$ and $v(a_1,a_2)=a_2a_1$ as the basis of $\calO(2)$. Then all shuffle trees whose internal vertices are labelled by $\{u,v\}$ form a basis of the corresponding free shuffle operad. Moreover, in the $S_3$-orbit of the identities~\eqref{eq:2var1} and~\eqref{eq:2var2}, we can easily find four linearly independent ones: they correspond to the identities with the left hand sides $x_1(x_2x_3)$, $x_1(x_3x_2)$, $(x_2x_3)x_1$, and $(x_3x_2)x_1$, or, in plain words, identities allowing to ``hide'' the smallest element inside the brackets; clearly, each of these four monomials appears only in its own identity, so the four identities are linearly independent. In the shuffle context, these four monomials are represented by the trees
 \[
\rbincomb{u}{u}{1}{2}{3}, 
\rbincomb{u}{v}{1}{2}{3}, 
\rbincomb{v}{u}{1}{2}{3}, 
\rbincomb{v}{v}{1}{2}{3}, 
 \]
so they are in fact the leading terms of these identities for the reverse path-lexicographic ordering. These leading terms eliminate all shuffle trees that are not left combs, leading to the upper bound $\dim\calO(n)\le 2^{n-1}(n-1)!$ for the dimensions of components of the operad $\calO$. 
\end{proof}

\begin{proof}[Proof of Proposition \ref{prop:BoundsMeet}]
The case of the trivial variety ($\dim\calO(2)=0$) is obvious. 

Suppose that $\dim\calO(2)=1$. In this case, Lemmas~\ref{lm:NSBound} and~\ref{lm:2VarBound} imply that in order for upper and the lower bound to not contradict each other, we must have $\dim\calO(n)=(n-1)!$ for all $n$. This happens if and only if all left combs are linearly independent in the corresponding shuffle operad, meaning that the only relation of the operad must form the reduced Gr\"obner basis for the reverse path-lexicographic ordering.
In the anti-commutative case, the Jacobi identity does indeed form a Gr\"obner basis~\cite[Sec.~5.6.1]{MR3642294}. However, in the commutative case, the element $a_1(a_2a_3)+(a_1a_2)a_3+(a_1a_3)a_2$ alone does not form a Gr\"obner basis: the S-polynomial of this element with itself reduces to the element 
\begin{multline*}
((a_1a_2)a_3)a_4
+((a_1a_2)a_4)a_3
+((a_1a_3)a_2)a_4
\\ +((a_1a_3)a_4)a_2
+((a_1a_4)a_2)a_3
+((a_1a_4)a_3)a_2,
\end{multline*}
which is a nontrivial linear combination of left combs, leading to the strict inequality $\dim\calO(n)<(n-1)!$ for all $n>3$. 

Finally, suppose that $\dim\calO(2)=2$. In this case, Lemmas~\ref{lm:NSBound} and~\ref{lm:2VarBound} imply that in order for upper and the lower bound to not contradict each other, we must have $\dim\calO(n)=2^{n-1}(n-1)!$ for all $n$. Examining the proof of Lemma~\ref{lm:2VarBound}, we note that the upper bound is attained in two steps. First, we have the inequality $\dim\calO(3)\le 8$, and it becomes an equality if and only if the module of quadratic relations of $\calO$ is of dimension $4$. Second, if the latter module is of dimension $4$, we obtain the general upper bound $\dim\calO(n)\le 2^{n-1}(n-1)!$, which is sharp if and only if all left combs are linearly independent in the corresponding shuffle operad, meaning that the four relations whose leading terms are the four possible right combs form the reduced Gr\"obner basis for the reverse path-lexicographic ordering.
\end{proof}

\section{Elimination of the potential candidates}\label{sec:proof}

To finish the proof of the main result, we need to show that no variety of non-associative algebras can meet all the conditions stated in the third bullet point of Proposition~\ref{prop:BoundsMeet}.

\begin{proposition}\label{prop:generalCase}
Let $\mathfrak{M}$ be a $2$-variety of non-associative algebras, encoded by an operad $\calO$. If $\dim \calO(2)=2$ and $\dim \calO(3) = 4$, then the operad $\calO$ cannot have a quadratic Gr\"obner basis for the reverse path-lexicographic order. 
\end{proposition}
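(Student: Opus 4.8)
The plan is to argue by contradiction and to carry the analysis of the proof of Lemma~\ref{lm:2VarBound} one step further, working in the corresponding shuffle operad with the generators $u(a_1,a_2)=a_1a_2$ and $v(a_1,a_2)=a_2a_1$, so that the ambient free shuffle operad is spanned by decorated shuffle trees with vertices labelled by $\{u,v\}$. As recorded in that proof, four $S_3$-translates of the $2$-variety identities~\eqref{eq:2var1}--\eqref{eq:2var2} are linearly independent and have, for the reverse path-lexicographic order, leading terms the four decorated right combs $\rbincomb{u}{u}{1}{2}{3}$, $\rbincomb{u}{v}{1}{2}{3}$, $\rbincomb{v}{u}{1}{2}{3}$, $\rbincomb{v}{v}{1}{2}{3}$; since the space of quadratic relations has dimension four (the situation of the third alternative of Proposition~\ref{prop:BoundsMeet}), these four relations form a basis of it, and eliminating against the four right-comb pivots rewrites this basis as four elements
\[
g_{ab}=\rbincomb{a}{b}{1}{2}{3}-\ell_{ab},\qquad a,b\in\{u,v\},
\]
where each $\ell_{ab}$ is a linear combination of the eight decorated left combs on three leaves. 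The hypothesis that $\calO$ has a quadratic Gr\"obner basis for the reverse path-lexicographic order is exactly the hypothesis that $\{g_{uu},g_{uv},g_{vu},g_{vv}\}$ is that Gr\"obner basis. The first, bookkeeping, step is to make the dependence explicit: expanding those four $S_3$-translates in the shuffle language and eliminating exhibits the coefficients of the $\ell_{ab}$ as explicit rational functions of the structure constants $\lambda_1,\dots,\lambda_8,\rho_1,\dots,\rho_8$ of~\eqref{eq:2var1}--\eqref{eq:2var2}, subject to the conditions on these constants that force the space of quadratic relations to be exactly four-dimensional.

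Observe that the mere existence of such a Gr\"obner basis would force $\dim\calO(n)=2^{n-1}(n-1)!$ for every $n$, which is compatible with both bounds of Section~\ref{sec:analysis}; hence the contradiction cannot come from a dimension count, but only from the reduction of S-polynomials. Producing those is the second step. Each of the four leading terms is a right comb on three leaves, and divisibility by a right comb places no constraint on the order of the leaves, so the only overlaps of the leading terms are the right combs on four leaves; there are $2^3=8$ of them, one for each decoration pattern, and each yields a single S-polynomial in the arity-four component. For every overlap monomial one forms the two one-step reductions---rewriting the upper pair of vertices, respectively the lower pair---takes their difference, and reduces it completely modulo the $g_{ab}$, using that a reduced monomial on four leaves must be a left comb. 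Requiring the coefficient of every arity-four left comb in the resulting normal form to vanish produces an explicit system of polynomial equations in $\lambda_1,\dots,\rho_8$.

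The decisive step, and the one I expect to be the genuine obstacle, is to prove that this polynomial system, together with the conditions of the first step, has no common solution over a field of characteristic zero---equivalently, that $1$ lies in the ideal they generate in $\k[\lambda_1,\dots,\rho_8]$ (after inverting the quantities whose non-vanishing is needed for the presentation to behave as above). This is where computer algebra is indispensable: the elimination is far too large to perform by hand, so the equations are fed to a Gr\"obner basis engine and one verifies membership of $1$. I would cut the computation down beforehand using the involution interchanging $u$ and $v$, which permutes the four $g_{ab}$ and pairs up several of the S-polynomials, and by first solving the linear part of the constraints of the first step so as to eliminate as many of the $\lambda_i,\rho_i$ as possible, possibly at the cost of a short case split on which of the remaining parameters vanish. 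The resulting inconsistency shows that the third alternative of Proposition~\ref{prop:BoundsMeet} never occurs, and, together with that proposition, this completes the proof of Theorem~\ref{th:main}.
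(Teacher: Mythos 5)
Your proposal is correct and follows essentially the same route as the paper: encode the four $2$-variety relations as ``right comb $=$ combination of left combs'', impose the conditions making the quadratic relation space exactly four-dimensional (in the paper, $S_3$-stability of the span, giving the $32$ arity-$3$ equations), add the confluence conditions from S-polynomials of the right-comb leading terms, and check by computer algebra that the resulting ideal contains $1$. The paper streamlines the bookkeeping by polarising into a commutative and an anticommutative operation (so the $(2\,3)$-symmetry is built into the $16$ parameters $\alpha_i,\beta_i,\gamma_i,\delta_i$) and gets by with a single overlap $a_1\cdot(a_2\cdot(a_3\cdot a_4))$ and only five of its $48$ coefficient equations; the decisive Gr\"obner-basis verification that you defer is exactly the computation the paper performs (and records in its appendices).
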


\begin{proof}
The strategy of the proof is the following. Any $2$-variety has to satisfy the identities~\eqref{eq:2var1} and~\eqref{eq:2var2} for certain coefficients in $\k$. The condition of having a quadratic Gr\"obner basis imposes some polynomial constraints on those coefficients, and we shall exhibit enough of those constraints to ensure that they cannot be satisfied simultaneously.
	
It will be convenient to use symmetries of operations. For that, we recall what is often referred to as the ``polarisation procedure'' \cite{MR2225770}.	

\begin{lemma}
Let $\mathfrak{M}$ be a variety of non-associative algebras. It is equivalent to a variety of algebras with one commutative and one anticommutative operation.
\end{lemma}

\begin{proof}
Let us consider the operations $a_1\cdot a_2 = a_1a_2 + a_2a_1$ and $a_1 \star a_2 = a_1a_2-a_2a_1$. They are commutative and anticommutative, respectively. Since 
\begin{gather*}
a_1a_2 = \dfrac{1}{2}(a_1\cdot a_2 + a_2 \star a_1),\\
a_2a_1 = \dfrac{1}{2}(a_1\cdot a_2 - a_2 \star a_1),
\end{gather*}
our change of operations is invertible and defines an equivalence of two varieties.
\end{proof}

Now we will examine the $S_3$-module $\calO(3)$. Let us define an ordering of the structure operations by setting $\cdot > \star$. Recall that the reverse path-lexicographic order on the shuffle monomials of degree~$3$ is the following:
	\begin{equation}\label{eq:order}
		\begin{aligned}
			&a_1 \cdot (a_2 \cdot a_3) > a_1 \cdot (a_2 \star a_3) > a_1 \star (a_2 \cdot a_3) > a_1 \star (a_2 \star a_3) \\
			> &(a_1 \cdot a_3) \cdot a_2 > (a_1 \cdot a_2) \cdot a_3 > (a_1 \star a_3) \cdot a_2 > (a_1 \star a_2) \cdot a_3 \\
			> &(a_1 \cdot a_3) \star a_2 > (a_1 \cdot a_2) \star a_3 > (a_1 \star a_3) \star a_2 > (a_1 \star a_2) \star a_3.
		\end{aligned}
	\end{equation}

We note that for the new structure operations, the system of four identities consisting of identities~\eqref{eq:2var1} and~\eqref{eq:2var2} and the identities obtained from them by the action of the transposition $(1 2)\in S_3$ can be rewritten as the system of four identities expressing the right combs
	\[
a_1 \cdot (a_2 \cdot a_3), a_1 \cdot (a_2 \star a_3), a_1 \star (a_2 \cdot a_3), a_1 \star (a_2 \star a_3)
	\]
as linear combinations of left combs. Moreover, the intrinsic commutative and anticommutative character of the operations $(\cdot,\star)$ forces some symmetry and antisymmetry constraints for coefficients of the left combs. Specifically, the four identities must have the form
		\begin{align*}
			a_1 \cdot (a_2 &\cdot a_3) = \alpha_1\big((a_1 \cdot a_3) \cdot a_2 + (a_1 \cdot a_2) \cdot a_3 \big) + \alpha_2\big( (a_1 \star a_3) \cdot a_2 + (a_1 \star a_2) \cdot a_3 \big) \\
			&+ \alpha_3\big( (a_1 \cdot a_3) \star a_2 + (a_1 \cdot a_2) \star a_3 \big) + \alpha_4\big( (a_1 \star a_3) \star a_2 + (a_1 \star a_2) \star a_3 \big), \\
			a_1 \cdot (a_2 &\star a_3) = \beta_1\big((a_1 \cdot a_3) \cdot a_2 - (a_1 \cdot a_2) \cdot a_3 \big) + \beta_2\big( (a_1 \star a_3) \cdot a_2 - (a_1 \star a_2) \cdot a_3 \big) \\
			&+ \beta_3\big( (a_1 \cdot a_3) \star a_2 - (a_1 \cdot a_2) \star a_3 \big) + \beta_4\big( (a_1 \star a_3) \star a_2 - (a_1 \star a_2) \star a_3 \big), \\
			a_1 \star (a_2 &\cdot a_3) = \gamma_1\big((a_1 \cdot a_3) \cdot a_2 + (a_1 \cdot a_2) \cdot a_3 \big) + \gamma_2\big( (a_1 \star a_3) \cdot a_2 + (a_1 \star a_2) \cdot a_3 \big) \\
			&+ \gamma_3\big( (a_1 \cdot a_3) \star a_2 + (a_1 \cdot a_2) \star a_3 \big) + \gamma_4\big( (a_1 \star a_3) \star a_2 + (a_1 \star a_2) \star a_3 \big), \\
			a_1 \star (a_2 &\star a_3) = \delta_1\big((a_1 \cdot a_3) \cdot a_2 - (a_1 \cdot a_2) \cdot a_3 \big) + \delta_2\big( (a_1 \star a_3) \cdot a_2 - (a_1 \star a_2) \cdot a_3 \big) \\
			&+ \delta_3\big( (a_1 \cdot a_3) \star a_2 - (a_1 \cdot a_2) \star a_3 \big) + \delta_4\big( (a_1 \star a_3) \star a_2 - (a_1 \star a_2) \star a_3 \big),
		\end{align*}
where the sixteen parameters $\alpha_1, \dots, \alpha_4, \beta_1, \dots, \beta_4, \gamma_1, \dots, \gamma_4, \delta_1, \dots, \delta_4$ belong to the ground field $\k$.
	
Let us write these equations in matrix form, where each column corresponds to a monomial, ordering them as in~\eqref{eq:order}.
	\begin{equation}\label{eq:matrix1}
		\left(
		\begin{array}{cccccccccccc}
			-1 & 0 & 0 & 0 & \alpha_1 & \alpha_1 & \alpha_2 & \alpha_2 & \alpha_3 & \alpha_3 & \alpha_4 & \alpha_4 \\
			0 & -1 & 0 & 0 & -\beta_1 & \beta_1 & -\beta_2 & \beta_2 & -\beta_3 & \beta_3 & -\beta_4 & \beta_4 \\
			0 & 0 & -1 & 0 & \delta_1 & \delta_1 & \delta_2 & \delta_2 & \delta_3 & \delta_3 & \delta_4 & \delta_4 \\
			0 & 0 & 0 & -1 & -\gamma_1 & \gamma_1 & -\gamma_2 & \gamma_2 & -\gamma_3 & \gamma_3 & -\gamma_4 & \gamma_4 \\
		\end{array}
		\right)
	\end{equation}
	
Since $\dim\calO(3)=8$, the consequences of our four identities obtained by the action of $S_3$ by permutations of arguments should be linear combinations of these identities themselves. We already ensured that the action of the transposition~$(2 3)$ preserves the vector space spanned by these identities. Since the group~$S_3$ is generated by the transpositions $(1 2)$ and $(2 3)$, it is enough to require that the linear span of the four identities is stable under the action of the transposition~$(1 2)$. That action transforms the rows of our matrix into 
	\[
		\left(
		\begin{array}{cccccccccccc}
			\alpha_1 & \alpha_2 & -\alpha_3 & -\alpha_4 & -1 & \alpha_1 & 0 & -\alpha_2 & 0 & \alpha_3 & 0 & -\alpha_4 \\
			-\beta_1 & -\beta_2 & \beta_3 & \beta_4 & 0 & \beta_1 & -1 & -\beta_2 & 0 & \beta_3 & 0 & -\beta_4 \\
			\delta_1 & \delta_2 & -\delta_3 & -\delta_4 & 0 & \delta_1 & 0 & -\delta_2 & 1 & \delta_3 & 0 & -\delta_4 \\
			-\gamma_1 & -\gamma_2 & \gamma_3 & \gamma_4 & 0 & \gamma_1 & 0 & -\gamma_2 & 0 & \gamma_3 & 1 & -\gamma_4 \\
		\end{array}
		\right) ,
	\]
so if $\dim\calO(3)=8$, the matrix 	
	\[
		\left(
		\begin{array}{cccccccccccc}
			-1 & 0 & 0 & 0 & \alpha_1 & \alpha_1 & \alpha_2 & \alpha_2 & \alpha_3 & \alpha_3 & \alpha_4 & \alpha_4 \\
			0 & -1 & 0 & 0 & -\beta_1 & \beta_1 & -\beta_2 & \beta_2 & -\beta_3 & \beta_3 & -\beta_4 & \beta_4 \\
			0 & 0 & -1 & 0 & \delta_1 & \delta_1 & \delta_2 & \delta_2 & \delta_3 & \delta_3 & \delta_4 & \delta_4 \\
			0 & 0 & 0 & -1 & -\gamma_1 & \gamma_1 & -\gamma_2 & \gamma_2 & -\gamma_3 & \gamma_3 & -\gamma_4 & \gamma_4 \\
			\alpha_1 & \alpha_2 & -\alpha_3 & -\alpha_4 & -1 & \alpha_1 & 0 & -\alpha_2 & 0 & \alpha_3 & 0 & -\alpha_4 \\
			-\beta_1 & -\beta_2 & \beta_3 & \beta_4 & 0 & \beta_1 & -1 & -\beta_2 & 0 & \beta_3 & 0 & -\beta_4 \\
			\delta_1 & \delta_2 & -\delta_3 & -\delta_4 & 0 & \delta_1 & 0 & -\delta_2 & 1 & \delta_3 & 0 & -\delta_4 \\
			-\gamma_1 & -\gamma_2 & \gamma_3 & \gamma_4 & 0 & \gamma_1 & 0 & -\gamma_2 & 0 & \gamma_3 & 1 & -\gamma_4 \\
		\end{array}
		\right)
	\]
must be of rank $4$. Performing elementary row operations to get a~$4 \times 4$ minor full of zeros in the bottom left corner of the matrix, we obtain in the bottom right corner a $4 \times 8$ minor with certain polynomials in~$\k[\alpha_1, \dots, \gamma_4]$ as entries (those polynomials are listed in Appendix~\ref{ap:equations3}). In order for the matrix to be of rank~$4$, all these polynomials have to vanish. The radical decomposition of the ideal formed by them can be computed using any computer algebra software such as \texttt{Magma}~\cite{MR1484478} or \texttt{SINGULAR}~\cite{DGPS}, and tells us that its geometric variety of solutions is formed by three irreducible components of dimensions 5, 4 and 0. This means that our set of 32 polynomials is not enough to finish the proof.
	
To proceed, we shall use the Gr\"obner basis condition. The Gr\"obner basis criterion furnished by Diamond Lemma \cite{MR3642294,MR2667136} asserts that a collection of elements forms a Gr\"obner basis if all common multiples of their leading terms admit an unambiguous rewriting into normal forms. 
In the proof of Proposition \ref{prop:BoundsMeet}, we already recalled that the mock-Lie identity does not form a Gr\"obner basis of the operad it defines, while the Jacobi identity in Lie algebras does form a Gr\"obner basis. This suggests that in the case under consideration, it is reasonable to look at the constraints on the parameters arising from the common multiple $a_1 \cdot (a_2 \cdot (a_3 \cdot a_4))$ of the leading term $a_1 \cdot (a_2 \cdot a_3)$ of the identity
\begin{align*}
			a_1 \cdot (a_2 &\cdot a_3) = \alpha_1\big((a_1 \cdot a_3) \cdot a_2 + (a_1 \cdot a_2) \cdot a_3 \big) + \alpha_2\big( (a_1 \star a_3) \cdot a_2 + (a_1 \star a_2) \cdot a_3 \big) \\
			&+ \alpha_3\big( (a_1 \cdot a_3) \star a_2 + (a_1 \cdot a_2) \star a_3 \big) + \alpha_4\big( (a_1 \star a_3) \star a_2 + (a_1 \star a_2) \star a_3 \big) \\
\end{align*}
with itself. 

Since it is a common multiple of the leading term $a_1 \cdot (a_2 \cdot a_3)$ with itself, \emph{a priori} there are two different ways to rewrite it. The first of them arises from the substitution $a_3\leftarrow a_3\cdot a_4$ into our identity. This way, we obtain 
	\begin{equation}\label{eq:firstDecomposition}
		\begin{aligned}
			a_1 \cdot (a_2 \cdot (a_3 \cdot a_4)) =& \alpha_1 (a_1\cdot (a_3\cdot a_4))\cdot a_2 + \alpha_1 (a_1\cdot a_2)\cdot (a_3\cdot a_4) \\
			{}&+ \alpha_2 (a_1\star (a_3\cdot a_4))\cdot a_2+ \alpha_2 (a_1\star a_2)\cdot (a_3\cdot a_4)\\ {}&+ \alpha_3 (a_1\cdot (a_3\cdot a_4))\star a_2 + \alpha_3 (a_1\cdot a_2)\star (a_3\cdot a_4) \\
			{}&+ \alpha_4 (a_1\star (a_3\cdot a_4))\star a_2 + \alpha_4 (a_1\star a_2)\star (a_3\cdot a_4).
		\end{aligned}
	\end{equation}
Note that we obtained a linear combination where two types of monomials appear: $(\_ * (\_ * \_))*\_ $ and $(\_ * \_) * (\_ * \_)$, where~$*$ can be either of the two operations. Each such monomial is divisible by a right comb, and therefore can be further rewritten. For instance, 
	\begin{equation}\label{eq:firstDecomposition_2}
		\begin{aligned}
			 (a_1\cdot (a_3\cdot a_4))\cdot a_2 =& \alpha_1 ((a_1\cdot a_4)\cdot a_3)\cdot a_2 + \alpha_1 ((a_1\cdot a_3)\cdot a_4)\cdot a_2\\ {} &+\alpha_2 ((a_1\star a_4)\cdot a_3)\cdot a_2+ \alpha_2 ((a_1\star a_3)\cdot a_4)\cdot a_2 \\
			 {} &+\alpha_3 ((a_1\cdot a_3)\star a_4)\cdot a_2+\alpha_3 ((a_1\cdot a_4)\star a_3)\cdot a_2 \\
			 {} &+ \alpha_4 ((a_1\star a_4)\star a_3)\cdot a_2 +\alpha_4 ((a_1\star a_3)\star a_4)\cdot a_2,
		\end{aligned}
	\end{equation}
	and
	\begin{equation}\label{eq:firstDecomposition_3}
		\begin{aligned}
			 (a_1\cdot a_2)\cdot (a_3\cdot a_4) =& \alpha_1 ((a_1\cdot a_2)\cdot a_4)\cdot a_3 + \alpha_1 ((a_1\cdot a_2)\cdot a_3)\cdot a_4\\
			 {} &+\alpha_2 ((a_1\cdot a_2)\star a_4)\cdot a_3 + \alpha_2 ((a_1\cdot a_2)\star a_3)\cdot a_4\\
			 {} &+\alpha_3 ((a_1\cdot a_2)\cdot a_4)\star a_3 +\alpha_3 ((a_1\cdot a_2)\cdot a_3)\star a_4 \\
			 {} &+ \alpha_4 ((a_1\cdot a_2)\star a_4)\star a_3+\alpha_4 ((a_1\cdot a_2)\star a_3)\star a_4.
		\end{aligned}
	\end{equation}
Performing this kind of rewriting for every monomial appearing in Equation~\eqref{eq:firstDecomposition}, we shall obtain a linear combination of left combs only.

On the other hand, rewriting the factor $a_2 \cdot (a_3 \cdot a_4)$ of our common multiple, we obtain
	\begin{equation}\label{eq:secondDecomposition}
		\begin{aligned}
			a_1 \cdot (a_2 \cdot (a_3 \cdot a_4)) =& \alpha_1 a_1\cdot ((a_2\cdot a_4)\cdot a_3) +\alpha_1 a_1\cdot ((a_2\cdot a_3)\cdot a_4) \\
			{} &+\alpha_2 a_1\cdot ((a_2\star a_4)\cdot a_3)+ \alpha_2 a_1\cdot ((a_2\star a_3)\cdot a_4)\\
			{} &+\alpha_3 a_1\cdot ((a_2\cdot a_4)\star a_3)+\alpha_3 a_1\cdot ((a_2\cdot a_3)\star a_4) \\
			{} &+\alpha_4 a_1\cdot ((a_2\star a_4)\star a_3)+\alpha_4 a_1\cdot ((a_2\star a_3)\star a_4).
		\end{aligned}
	\end{equation}
This way, we got a linear combination of monomials of the form $\_ \cdot ((\_ * \_) * \_)$, where~$*$ can be either of the two operations. Each such monomial is divisible by a right comb, and therefore can be further rewritten. That rewriting will not yet give a linear combination of left combs, as some elements of the form $(\_ * (\_ * \_))*\_ $ and $(\_ * \_) * (\_ * \_)$ may appear. Rewriting their right comb divisors, we shall obtain a linear combination of left combs.
	
Let us summarise the upshot of our calculation. Rewriting the monomial $a_1 \cdot (a_2 \cdot (a_3 \cdot a_4))$ in two possible ways, we obtain two different combinations of left combs. If our operad has a quadratic Gr\"obner basis for the reverse path-lexicographic ordering, the left combs must be linearly independent, so the two linear combinations we obtained must be equal. There are $48$ left combs of arity $4$, and thus we obtain $48$ new polynomial constraints on the values of the parameters $\alpha_1, \dots, \delta_4$. We already know that the maximal dimension of the irreducible component of the affine algebraic variety defined by constraints in arity~$3$ is equal to $5$. Thus, one may expect that taking just five of the $48$ equations should be sufficient for our purposes. This is indeed the case: if we look at the coefficients of the monomials 
	\[
		((a_1\cdot a_2)\cdot a_3)\cdot a_4, ((a_1\cdot a_2)\cdot a_3) \star a_4, ((a_1\cdot a_3)\cdot a_4)\cdot a_2, ((a_1\cdot a_3)\star a_4)\cdot a_2, ((a_1\cdot a_2)\star a_3)\cdot a_4,
	\]
that are listed in Appendix~\ref{ap:equations4}, joined with the 32 polynomials obtained on the previous step (listed in Appendix~\ref{ap:equations3}), we find that these polynomials have no common zeros. This follows from the fact (checked independently by several computer algebra systems, notably \texttt{Magma}~\cite{MR1484478} and \texttt{SINGULAR}~\cite{DGPS}) that the reduced Gr\"obner basis of the ideal generated by these polynomials for the lexicographical order of variables consists of the constant polynomial $1$. 
\end{proof}

\begin{theorem}\label{th:main}
The only non-trivial variety of non-associative algebras that is both a 2-variety and a Nielsen--Schreier variety is the variety of Lie algebras.
\end{theorem}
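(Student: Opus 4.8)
The plan is to deduce Theorem~\ref{th:main} from Propositions~\ref{prop:BoundsMeet} and~\ref{prop:generalCase}, after recording the elementary fact that the variety of Lie algebras really does satisfy both hypotheses, so that it is a genuine (and hence, by what precedes, the unique) example. First I would observe that the two conditions in the statement are, respectively, the Nielsen--Schreier property and the 2-variety property, so that a variety $\mathfrak{M}$ satisfying them, encoded by an operad $\calO$, is exactly of the kind considered in Proposition~\ref{prop:BoundsMeet}. Since $\mathfrak{M}$ is assumed non-trivial, $\calO(2)\neq\{0\}$, and the trichotomy of Proposition~\ref{prop:BoundsMeet} leaves precisely two cases: either $\dim\calO(2)=1$, in which case that proposition already identifies $\mathfrak{M}$ with the variety of all Lie algebras, and we are done; or $\dim\calO(2)=2$, in which case the proposition moreover asserts that the module of quadratic relations of $\calO$ is four-dimensional and that $\calO$ admits a quadratic Gr\"obner basis for the reverse path-lexicographic ordering.

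The next step is to eliminate the case $\dim\calO(2)=2$, which is exactly what Proposition~\ref{prop:generalCase} does: a 2-variety whose operad $\calO$ satisfies $\dim\calO(2)=2$ with a four-dimensional module of quadratic relations cannot have a quadratic Gr\"obner basis for the reverse path-lexicographic order. This contradicts the conclusion of Proposition~\ref{prop:BoundsMeet} in that case, so $\dim\calO(2)=2$ is impossible, and therefore $\mathfrak{M}$ is the variety of Lie algebras.

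It remains to verify the converse direction, namely that the variety of Lie algebras is a non-trivial variety enjoying both properties, so that the characterisation is not vacuous. Non-triviality is clear. For the 2-variety property one uses the Jacobi identity in the form $[z,[x,y]]=[[z,x],y]+[x,[z,y]]$, which shows that if $I$ is an ideal then so is $I^2=[I,I]$; equivalently, the Jacobi identity is an instance of the system~\eqref{eq:2var1}--\eqref{eq:2var2} for a suitable choice of coefficients, so by Anderson's characterisation the variety of Lie algebras is a 2-variety. For the Nielsen--Schreier property one invokes the classical Shirshov--Witt theorem: every subalgebra of a free Lie algebra is free. Combining this with the previous paragraph establishes Theorem~\ref{th:main}.

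As for where the weight of the argument lies: granting Propositions~\ref{prop:BoundsMeet} and~\ref{prop:generalCase}, the proof of Theorem~\ref{th:main} is a purely formal assembly, and the only points needing care are applying the trichotomy of Proposition~\ref{prop:BoundsMeet} correctly and not overlooking the converse direction. The genuine obstacle in the development as a whole is the computer-algebra input inside Proposition~\ref{prop:generalCase}: showing that the polynomial constraints produced by the Diamond Lemma in arities $3$ and $4$ have no common solution, i.e.\ that the reduced Gr\"obner basis of the ideal they generate is the constant polynomial $1$.
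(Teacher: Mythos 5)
Your proof is correct and follows essentially the same route as the paper: invoke the trichotomy of Proposition~\ref{prop:BoundsMeet}, discard the trivial case by non-triviality, and discard the $\dim\calO(2)=2$ case via Proposition~\ref{prop:generalCase}, leaving only the variety of Lie algebras. Your additional paragraph checking that Lie algebras do satisfy both hypotheses (Shirshov--Witt for the Nielsen--Schreier property, the Jacobi identity for the 2-variety property) is a harmless and reasonable supplement that the paper leaves implicit, not a different approach.
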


\begin{proof}
	We know that one of the three possibilities of Proposition~\ref{prop:BoundsMeet} must occur. The assumption on non-triviality of $\mathfrak{M}$ eliminates the first possibility, and Proposition~\ref{prop:generalCase} eliminates the third one. Thus, $\mathfrak{M}$ is the variety of Lie algebras.
\end{proof}

\section*{Funding} The first author was supported by Institut Universitaire de France, by Fellowship of the University of Strasbourg Institute for Advanced Study through the French national program ``Investment for the future'' (IdEx-Unistra, grant USIAS-2021-061), and by the French national research agency (grant ANR-20-CE40-0016). The second author was supported by Ministerio de Ciencia e Innovación (grant PID2021-127075NA-I00) and by a Postdoctoral Fellowship of the Research Foundation Flanders (FWO).

\section*{Acknowledgements} 
The second author would like to thank the Institut de Recherche Mathématique Avancée (IRMA) for its kind hospitality during his stay in Strasbourg.

\bibliographystyle{plain}
\bibliography{biblio}

\begin{thebibliography}{10}

\bibitem{MR2724388}
Marcelo Aguiar and Swapneel Mahajan.
\newblock {\em Monoidal functors, species and {H}opf algebras}, volume~29 of
  {\em CRM Monograph Series}.
\newblock American Mathematical Society, Providence, RI, 2010.
\newblock With forewords by Kenneth Brown and Stephen Chase and Andr\'{e}
  Joyal.

\bibitem{MR285564}
Tim Anderson.
\newblock The {L}evitzki radical in varieties of algebras.
\newblock {\em Math. Ann.}, 194:27--34, 1971.

\bibitem{MR469986}
Tim Anderson and Erwin Kleinfeld.
\newblock On a class of {$2$}-varieties.
\newblock {\em J. Algebra}, 51(2):367--374, 1978.

\bibitem{MR1629341}
F.~Bergeron, G.~Labelle, and P.~Leroux.
\newblock {\em Combinatorial species and tree-like structures}, volume~67 of
  {\em Encyclopedia of Mathematics and its Applications}.
\newblock Cambridge University Press, Cambridge, 1998.
\newblock Translated from the 1994 French original by Margaret Readdy, With a
  foreword by Gian-Carlo Rota.

\bibitem{MR1484478}
Wieb Bosma, John Cannon, and Catherine Playoust.
\newblock The {M}agma algebra system. {I}. {T}he user language.
\newblock {\em J. Symbolic Comput.}, 24(3-4):235--265, 1997.
\newblock Computational algebra and number theory (London, 1993).

\bibitem{MR3642294}
Murray~R. Bremner and Vladimir Dotsenko.
\newblock {\em Algebraic operads: an algorithmic companion}.
\newblock CRC Press, Boca Raton, FL, 2016.

\bibitem{MR3438233}
Alan~S. Cigoli, James R.~A. Gray, and Tim Van~der Linden.
\newblock Algebraically coherent categories.
\newblock {\em Theory Appl. Categ.}, 30:Paper No. 54, 1864--1905, 2015.

\bibitem{DGPS}
Wolfram Decker, Gert-Martin Greuel, Gerhard Pfister, and Hans Sch\"onemann.
\newblock {\sc Singular} {4-3-0} --- {A} computer algebra system for polynomial
  computations.
\newblock \url{http://www.singular.uni-kl.de}, 2022.

\bibitem{MR4114993}
Vladimir Dotsenko.
\newblock Word operads and admissible orderings.
\newblock {\em Appl. Categ. Structures}, 28(4):595--600, 2020.

\bibitem{MR2667136}
Vladimir Dotsenko and Anton Khoroshkin.
\newblock Gr\"{o}bner bases for operads.
\newblock {\em Duke Math. J.}, 153(2):363--396, 2010.

\bibitem{DU22}
Vladimir Dotsenko and Ualbai Umirbaev.
\newblock An effective criterion for {N}ielsen--{S}chreier varieties.
\newblock 2022.

\bibitem{MR4330276}
X.~Garc\'{\i}a-Mart\'{\i}nez, M.~Tsishyn, T.~Van~der Linden, and C.~Vienne.
\newblock Algebras with representable representations.
\newblock {\em Proc. Edinb. Math. Soc. (2)}, 64(3):555--573, 2021.

\bibitem{MR3955044}
Xabier Garc\'{\i}a-Mart\'{\i}nez and Tim Van~der Linden.
\newblock A characterisation of {L}ie algebras amongst anti-commutative
  algebras.
\newblock {\em J. Pure Appl. Algebra}, 223(11):4857--4870, 2019.

\bibitem{MR3872845}
Xabier Garc\'{\i}a-Mart\'{\i}nez and Tim Van~der Linden.
\newblock A characterisation of {L}ie algebras via algebraic exponentiation.
\newblock {\em Adv. Math.}, 341:92--117, 2019.

\bibitem{MR2925888}
James Richard~Andrew Gray.
\newblock Algebraic exponentiation for categories of {L}ie algebras.
\newblock {\em J. Pure Appl. Algebra}, 216(8-9):1964--1967, 2012.

\bibitem{MR2990906}
James Richard~Andrew Gray.
\newblock Algebraic exponentiation in general categories.
\newblock {\em Appl. Categ. Structures}, 20(6):543--567, 2012.

\bibitem{MR633783}
Andr\'{e} Joyal.
\newblock Une th\'{e}orie combinatoire des s\'{e}ries formelles.
\newblock {\em Adv. in Math.}, 42(1):1--82, 1981.

\bibitem{MR414640}
Margaret~H. Kleinfeld.
\newblock A generalization of anti-commutative rings arising from
  {$2$}-varieties.
\newblock {\em Comm. Algebra}, 4(10):919--927, 1976.

\bibitem{MR0020986}
A.~Kurosh.
\newblock Non-associative free algebras and free products of algebras.
\newblock {\em Rec. Math. [Mat. Sbornik] N.S.}, 20(62):239--262, 1947.

\bibitem{MR653895}
Shao~Xue Liu and Chester~E. Tsai.
\newblock Wedderburn theorem on varieties of algebras.
\newblock {\em J. Algebra}, 75(2):315--323, 1982.

\bibitem{MR0354798}
Saunders MacLane.
\newblock {\em Categories for the working mathematician}.
\newblock Graduate Texts in Mathematics, Vol. 5. Springer-Verlag, New
  York-Berlin, 1971.

\bibitem{MR2225770}
M.~Markl and E.~Remm.
\newblock Algebras with one operation including {P}oisson and other
  {L}ie-admissible algebras.
\newblock {\em J. Algebra}, 299(1):171--189, 2006.

\bibitem{MR1512188}
Jakob Nielsen.
\newblock Die {I}somorphismengruppe der freien {G}ruppen.
\newblock {\em Math. Ann.}, 91(3-4):169--209, 1924.

\bibitem{MR797659}
Giulia~Maria Piacentini~Cattaneo.
\newblock A special class of {$2$}-varieties.
\newblock {\em J. Algebra}, 95(1):116--124, 1985.

\bibitem{MR506474}
David Pokrass.
\newblock Some radical properties of rings with {$(a, b, c)=(c, a, b)$}.
\newblock {\em Pacific J. Math.}, 76(2):479--483, 1978.

\bibitem{MR283034}
Michael Rich.
\newblock Some radical properties of {$s$}-rings.
\newblock {\em Proc. Amer. Math. Soc.}, 30:40--42, 1971.

\bibitem{MR3069472}
Otto Schreier.
\newblock Die {U}ntergruppen der freien {G}ruppen.
\newblock {\em Abh. Math. Sem. Univ. Hamburg}, 5(1):161--183, 1927.

\bibitem{MR1302528}
U.~U. Umirbaev.
\newblock On {S}chreier varieties of algebras.
\newblock {\em Algebra i Logika}, 33(3):317--340, 343, 1994.

\bibitem{MR0059892}
A.~I. \v{S}ir\v{s}ov.
\newblock Subalgebras of free {L}ie algebras.
\newblock {\em Mat. Sbornik N.S.}, 33(75):441--452, 1953.

\bibitem{MR0062112}
A.~I. \v{S}ir\v{s}ov.
\newblock Subalgebras of free commutative and free anticommutative algebras.
\newblock {\em Mat. Sbornik N.S.}, 34(76):81--88, 1954.

\bibitem{MR77525}
Ernst Witt.
\newblock Die {U}nterringe der freien {L}ieschen {R}inge.
\newblock {\em Math. Z.}, 64:195--216, 1956.

\bibitem{MR281763}
Paul~J. Zwier.
\newblock Prime ideals in a large class of nonassociative rings.
\newblock {\em Trans. Amer. Math. Soc.}, 158:257--271, 1971.

\end{thebibliography}

\appendix
\section{Equations}
\subsection{Equations in degree 3}\label{ap:equations3}
\small
\allowdisplaybreaks
\begin{align*}
	f_{1}&= \alpha_1^2-\alpha_2 \beta_1+\alpha_4 \gamma_1-\alpha_3 \delta_1-1\\ 
	f_{2}&= \alpha_1^2+\alpha_1+\alpha_2 \beta_1-\alpha_4 \gamma_1-\alpha_3 \delta_1\\ 
	f_{3}&= \alpha_1 \alpha_2-\beta_2 \alpha_2+\alpha_4 \gamma_2-\alpha_3 \delta_2\\ 
	f_{4}&= \alpha_1 \alpha_2+\beta_2 \alpha_2-\alpha_2-\alpha_4 \gamma_2-\alpha_3 \delta_2\\ 
	f_{5}&= \alpha_1 \alpha_3-\delta_3 \alpha_3-\alpha_2 \beta_3+\alpha_4 \gamma_3\\ 
	f_{6}&= \alpha_1 \alpha_3-\delta_3 \alpha_3+\alpha_3+\alpha_2 \beta_3-\alpha_4 \gamma_3\\ 
	f_{7}&= \alpha_1 \alpha_4+\gamma_4 \alpha_4-\alpha_2 \beta_4-\alpha_3 \delta_4\\ 
	f_{8}&= \alpha_1 \alpha_4-\gamma_4 \alpha_4-\alpha_4+\alpha_2 \beta_4-\alpha_3 \delta_4\\ 
	f_{9}&= -\alpha_1 \beta_1+\beta_2 \beta_1-\beta_4 \gamma_1+\beta_3 \delta_1\\ 
	f_{10}&= -\alpha_1 \beta_1-\beta_2 \beta_1+\beta_1+\beta_4 \gamma_1+\beta_3 \delta_1\\ 
	f_{11}&= \beta_2^2-\alpha_2 \beta_1-\beta_4 \gamma_2+\beta_3 \delta_2-1\\ 
	f_{12}&= -\beta_2^2-\beta_2-\alpha_2 \beta_1+\beta_4 \gamma_2+\beta_3 \delta_2\\ 
	f_{13}&= -\alpha_3 \beta_1+\beta_2 \beta_3-\beta_4 \gamma_3+\beta_3 \delta_3\\ 
	f_{14}&= -\alpha_3 \beta_1-\beta_2 \beta_3+\beta_3+\beta_4 \gamma_3+\beta_3 \delta_3\\ 
	f_{15}&= -\alpha_4 \beta_1+\beta_2 \beta_4-\beta_4 \gamma_4+\beta_3 \delta_4\\ 
	f_{16}&= -\alpha_4 \beta_1-\beta_2 \beta_4-\beta_4+\beta_4 \gamma_4+\beta_3 \delta_4\\ 
	f_{17}&= \alpha_1 \delta_1-\delta_3 \delta_1-\beta_1 \delta_2+\gamma_1 \delta_4\\ 
	f_{18}&= \alpha_1 \delta_1-\delta_3 \delta_1+\delta_1+\beta_1 \delta_2-\gamma_1 \delta_4\\ 
	f_{19}&= \alpha_2 \delta_1-\beta_2 \delta_2-\delta_2 \delta_3+\gamma_2 \delta_4\\ 
	f_{20}&= \alpha_2 \delta_1+\beta_2 \delta_2-\delta_2-\delta_2 \delta_3-\gamma_2 \delta_4\\ 
	f_{21}&= -\delta_3^2+\alpha_3 \delta_1-\beta_3 \delta_2+\gamma_3 \delta_4+1\\ 
	f_{22}&= -\delta_3^2+\delta_3+\alpha_3 \delta_1+\beta_3 \delta_2-\gamma_3 \delta_4\\ 
	f_{23}&= \alpha_4 \delta_1-\beta_4 \delta_2+\gamma_4 \delta_4-\delta_3 \delta_4\\ 
	f_{24}&= \alpha_4 \delta_1+\beta_4 \delta_2-\gamma_4 \delta_4-\delta_3 \delta_4-\delta_4\\ 
	f_{25}&= -\alpha_1 \gamma_1-\gamma_4 \gamma_1+\beta_1 \gamma_2+\gamma_3 \delta_1\\ 
	f_{26}&= -\alpha_1 \gamma_1+\gamma_4 \gamma_1+\gamma_1-\beta_1 \gamma_2+\gamma_3 \delta_1\\ 
	f_{27}&= -\alpha_2 \gamma_1+\beta_2 \gamma_2-\gamma_2 \gamma_4+\gamma_3 \delta_2\\ 
	f_{28}&= -\alpha_2 \gamma_1-\beta_2 \gamma_2-\gamma_2+\gamma_2 \gamma_4+\gamma_3 \delta_2\\ 
	f_{29}&= -\alpha_3 \gamma_1+\beta_3 \gamma_2-\gamma_3 \gamma_4+\gamma_3 \delta_3\\ 
	f_{30}&= -\alpha_3 \gamma_1-\beta_3 \gamma_2+\gamma_3+\gamma_3 \gamma_4+\gamma_3 \delta_3\\ 
	f_{31}&= -\gamma_4^2-\alpha_4 \gamma_1+\beta_4 \gamma_2+\gamma_3 \delta_4+1\\ 
	f_{32}&= \gamma_4^2-\gamma_4-\alpha_4 \gamma_1-\beta_4 \gamma_2+\gamma_3 \delta_4
\end{align*}

\subsection{Equations in degree 4}\label{ap:equations4}
\small
\allowdisplaybreaks
\begin{align*}
	g_{1} =& \alpha_3 \delta_1+\alpha_1^2 -\gamma_1 \left(\alpha_4 \beta_2+\alpha_2^2\right)\\ &-\delta_1 \left(\alpha_3 \beta_2+\alpha_1 \alpha_2\right)-\alpha_1 \left(\alpha_3 \beta_1+\alpha_1^2\right)-\beta_1 \left(\alpha_4 \beta_1+\alpha_1 \alpha_2\right) \\
	g_{2} =& \alpha_3 \delta_3+\alpha_1 \alpha_3-\gamma_1 \left(\alpha_4 \beta_4+\alpha_2 \alpha_4\right) \\ & -\delta_1 \left(\alpha_3 \beta_4+\alpha_1 \alpha_4\right)-\alpha_1 \left(\alpha_3 \beta_3+\alpha_1 \alpha_3\right)-\beta_1 \left(\alpha_4 \beta_3+\alpha_2 \alpha_3\right) \\
	g_{3} =& \alpha_2 \delta_1+\alpha_1^2-\gamma_1 \left(\alpha_4 \beta_3-\alpha_2 \alpha_3\right)\\ &-\delta_1 \left(\alpha_1 \alpha_3-\alpha_3 \beta_3\right)-\alpha_1 \left(\alpha_1^2-\alpha_3 \beta_1\right)+\beta_1 \left(\alpha_1 \alpha_2-\alpha_4 \beta_1\right) \\
	g_{4} =& \alpha_1 \alpha_3-\gamma_2 \left(\alpha_4 \beta_3-\alpha_2 \alpha_3\right)\\ &-\delta_2 \left(\alpha_1 \alpha_3-\alpha_3 \beta_3\right)-\alpha_2 \left(\alpha_1^2-\alpha_3 \beta_1\right)-\beta_2 \left(\alpha_4 \beta_1-\alpha_1 \alpha_2\right)+\alpha_2 \delta_3 \\
	g_{5} =& \alpha_3 \delta_2+\alpha_1 \alpha_2-\gamma_3 \left(\alpha_4 \beta_2+\alpha_2^2\right)\\ &-\delta_3 \left(\alpha_3 \beta_2+\alpha_1 \alpha_2\right)-\alpha_3 \left(\alpha_3 \beta_1+\alpha_1^2\right)-\beta_3 \left(\alpha_4 \beta_1+\alpha_1 \alpha_2\right) 
\end{align*}

\end{document}